\documentclass[sn-apa]{sn-jnl}
\usepackage{amsmath}
\usepackage{amsthm}
\usepackage{amsfonts}
\usepackage{amssymb}
\usepackage[version=4]{mhchem}
\usepackage{stmaryrd}
\usepackage{mathrsfs}
\usepackage[scr=rsfso]{mathalfa}

\usepackage{graphicx}
\usepackage{array}
\usepackage{booktabs}
\usepackage{multirow}
\usepackage{adjustbox}
\usepackage{float}
\usepackage{tabularx}
\usepackage{makecell} 
\usepackage{subcaption}

\usepackage[utf8]{inputenc}
\usepackage{xcolor}
\usepackage{textcomp}
\usepackage{listings}
\usepackage{algorithm}
\usepackage{algorithmicx}
\usepackage{algpseudocode}
\usepackage{caption}
\usepackage{enumitem}

\hypersetup{
    colorlinks=true,
    linkcolor=blue,
    citecolor=blue,
    urlcolor=blue
}

\usepackage[english]{babel}
\usepackage[T1]{fontenc}
\usepackage{csquotes}

\theoremstyle{thmstyleone}
\newtheorem{theorem}{Theorem}

\theoremstyle{thmstyletwo}

\newtheorem{remark}{Remark}
\theoremstyle{thmstylethree}

\numberwithin{equation}{section}
\DeclareUnicodeCharacter{03B6}{\ifmmode\zeta\else{$\zeta$}\fi}
\DeclareUnicodeCharacter{03C4}{\ifmmode\tau\else{$\tau$}\fi}
\DeclareUnicodeCharacter{03A3}{\ifmmode\Sigma\else{$\Sigma$}\fi}
\DeclareUnicodeCharacter{202F}{\ifmmode\ \else{ }\fi}
\DeclareUnicodeCharacter{200B}{\ifmmode\Sigma\else{$\Sigma$}\fi}
\title{A Microeconomic Finance Model with a Multi-Asset Market and a Multi-Investor Heterogeneous Groups}
\author*[1]{\fnm{Mario} \sur{Cavani}}\email{mcavani@udo.edu.ve}
\affil*[1]{\orgdiv{Department of Mathematics}, \orgname{Universidad de Oriente}, \orgaddress{\street{Av. Universidad}, \city{Cuman\'a}, \postcode{6101}, \country{Venezuela}}}
\date{}

\begin{document}
\maketitle
\begin{abstract}
\indent We present a mathematical model of a market with $m$ shares traded across $n$ investor groups, each one with similar motivations and trading strategies. The market of each asset consists of a fixed amount of cash and shares (no additions are allowed over time, so the system is closed), and the trading groups are influenced by trend and valuation motivations when buying or selling each asset, but follow a strategy where the purchase of one asset depends on the price of another, while the sale does not. Using these assumptions and basic microeconomic principles, the mathematical model is derived using a dynamic systems approach. We analyze the stability of the model's equilibrium points and determine the parameter conditions for such stability. First, we show that all equilibria are stable in the absence of a clear emphasis on trend-based valuation for each share. Secondly, for systems where the trading group prioritizes the valuation of each stock and the trend of the other for trading purposes, we establish stability conditions and demonstrate with numerical examples that when instability occurs, it manifests as price oscillations in the stocks. Furthermore, we argue for the existence of periodic solutions via a Hopf bifurcation, taking the momentum coefficient as the bifurcation parameter. Finally, we present examples and numerical simulations to support and expand upon the analytical results. One finding in economics and finance is the existence of cyclical behavior in the absence of exogenous factors, as determined by the momentum coefficient. In particular, a stable equilibrium price becomes unstable as trend-based trading increases.

\textbf{Keywords:} Market dynamics, Fundamental price, Trend sentiment, Value sentiment, Multi-market asset, Multi-investors.
\end{abstract}
\section{Introduction}
Recent advances in mathematical finance have highlighted the importance of incorporating both multiple asset classes and heterogeneous investor groups when modeling market dynamics. This paper extends the two-asset model presented by \citet{Bulut2019} to an $m$-asset model, and incorporates multiple investor groups with different trading strategies, building upon the multi-group framework developed by \citet{CaginalpDeSantis2011}. The resulting model captures complex interactions between different asset classes and investor types, providing a more comprehensive framework for analyzing market stability, bifurcations, and cyclic behavior.
In this paper, we follow in the footsteps of these earlier models, but we focus on the price dynamics of an $m$-asset market. We still assume that the number of shares of each asset and the amount of cash are constant over time, but each trader's trading strategy now depends on all the stock prices, which couple their resulting dynamics. The model is an extension of the models derived for a single asset market system given in \citep{CaginalpBalenovich1994,caginalp1999asset}. It is derived by assuming similar conditions, including the finiteness of assets, so that unlimited arbitrage is not possible. It is assumed that there are $N_j^{(i)}$ shares of stock $i$, and $M_j$ units of cash for each group of investor $j$ in the system. These investors follow a trading strategy in which the buying of an asset depends on the other asset’s prices, while the selling does not. With respect to this trading strategy, each investor group has preference functions for a stock that are influenced by price momentum and discount from fundamental value. Using basic microeconomic principles, we derive a complete system of first-order non-linear differential equations. We present the equilibrium stability analysis of the model for several cases and use numerical simulations to support and extend the analytical results

\section{Literature Review}
Recent literature in financial mathematics has emphasized several key developments:

\begin{enumerate}
  \item \textit{Multi-Asset Models}: The foundational work by \citet{Bulut2019}  established a deterministic framework for two-asset markets and one homogeneous group of investors, demonstrating how coupling between assets can lead to complex dynamics, including Hopf bifurcations. Subsequent work by \citet{DeSantisSwigon2018} extended this to analyze dynamics in wealth distribution. \citet{he2018time} develop a continuous‑time heterogeneous‑agent financial market model of multi‑assets traded by fundamental and momentum investors; they provide a mechanism for generating time‑varying dominance between fundamental and non‑fundamental trading, showing that investment constraints lead to the coexistence of a locally stable fundamental steady state and a locally stable limit cycle around the fundamental, characterised by a generalised Hopf bifurcation; this bistability provides a mechanism for market prices to switch stochastically between two persistent but very different market states,  a phenomenon that can explain the seemingly contradictory coexistence of efficient‑market behaviour and price momentum over different time periods. The model there also generates spillover effects in both momentum and volatility, market booms, crashes, and correlation reduction due to cross‑sectional momentum trading; empirical evidence from the U.S. market supports the main findings. Other recent articles on the topic that give an important vision of the multi-asset topic are \citep{al2022novel,fahim2024derivation}.

  \item \textit{Heterogeneous Agent Models}: \citet{CaginalpDeSantis2011} introduced a rigorous mathematical framework for markets with multiple investor groups, each with distinct trading strategies. \citet{desantis2023asset} analyses a deterministic asset flow model for a homogeneous group of investors with identical trading sentiments,  with natural modifications of the response functions, the model can be represented by a linear dynamical system across the entire state space, yet it still exhibits all types of price behaviour (overshoots, oscillations, bubbles) as the nonlinear version. The paper presents a limiting case that corresponds to fast trading, appropriate for modern electronic markets, and provides examples illustrating the differences in price response to variability in intrinsic value or liquidity. \citet{li2022continuous} develop a continuous heterogeneous agent model (HAM) where agents are identified by their wealth and ex‑post price expectations, relying on weaker behavioural assumptions than traditional models; the joint evolution equations for stock price and wealth distribution are derived under a bounded‑arbitrage condition, an endogenous generating mechanism for agent types arises from a splitting‑mixing property, linking the continuous HAM to classical HAMs with finite types; numerical simulations show that wealth dynamics positively associate the weight of each generated agent type with their average investment performance, while price dynamics exhibit period‑doubling and chaotic bifurcations. The authors there, introducing random shocks, the model can simultaneously generate volatility clustering and long‑range dependence,  two stylized facts that are often difficult to replicate in deterministic models. \citet{chen2023evolutionary} use an agent‑based model to investigate the effects of homogeneous and heterogeneous investment strategies and reference points on price movements; the key insight is that markets flooded with homogeneous investors experience large price fluctuations, while markets with heterogeneous investors exhibit only moderate fluctuations; moreover, the coexistence of different types of investment strategies can help both refrain from large price fluctuations and avoid no‑trading states. Also, the papers \citep{caginalp2020nonlinear,caginalp2021stochastic,schnetzer2022evolutionary,chan2022investor} are interesting ones and related to our research.

  \item \textit{Behavioral Factors}: The model here was built upon earlier behavioral finance work by \citep{shefrin2008behavioral,DanielHirshleiferSubrahmanyam1998}, also the work by \citet{MerdanAlisen2011,caginalp2008dynamics} has further developed the incorporation of psychological factors like momentum and overreaction into deterministic models. The book of \citet{shefrin2008behavioral} offers a wide vision of this topic.

  \item \textit{Empirical Validation}: Experimental studies by \citet{caginalp2000momentum} and \citet{smith1988} have provided empirical support for many of the nonlinear phenomena predicted by these models. \citet{zhou2022continuous} propose a continuous heterogeneous‑agent model (HAM) that applies to a general $n > 1$ risky assets under market matching friction; every investor is identified by continuous variables representing personal characteristics, and the distribution of these variables captures the market heterogeneity. Next, by introducing market friction via a matching‑based pricing mechanism, the authors derive a pricing equation that extends the classical Capital Asset Pricing Model (CAPM) and specifies how bounded rationality and heterogeneity drive asset prices away from their CAPM values; the model provides a rolling‑window‑based maximum-likelihood algorithm for calibration, which then yields a forecast tool for future stock returns and an adaptive portfolio-construction strategy. 

  \item \textit{Recent Advances}: \citet{cordoni2024instabilities} extend the classic two‑agent, one‑asset market impact game to a general setting with multiple assets and multiple agents; they study the Nash equilibria under transient price impact and quadratic transaction costs, finding that a larger number of assets, greater heterogeneity in trading skills, and a more complex cross‑impact matrix (i.e., multiple liquidity factors) make the market more prone to large oscillations and instability. \citet{dieci2018steady} provide a full analytical treatment of a multi‑asset market model in which speculators can switch between two risky assets and one safe asset.; a four‑dimensional nonlinear map drives the dynamics and may undergo a transcritical, flip, or Neimark‑Sacker bifurcation,  the first bifurcation is associated with an undervaluation of the risky assets, while the latter two may trigger complex endogenous dynamics; the authors first study a simpler two‑dimensional setup to build intuition, then extend to the full multi‑asset case. We mention the works \citep{caginalp2021stochastic} that are related to our research. \citet{Akhter2024} have used a similar model for one asset, one investor related to Bitcoin.

\end{enumerate}

\section{The Mathematical Model}

\subsection{Market Structure}
Consider a financial market involving $m$  assets with prices \( P^{(i)}(t) \), \( i = 1,\ldots,m \), traded within an heterogeneous group of $n$ investors, i.e., investors who share their trading strategies and preferences in each one of the $j-th$ group.  It is assumed that each group of investors has a fixed amount of cash \( M_j \) and shares \( N_j^{(i)} \) for each group \( j \) units of stock $i$. We assume that the trading group follows a strategy in which the buying of an asset depends on the other asset’s price, while the selling does not. With respect to our assumption on the trading strategy.

\subsection{Investor Sentiment Dynamics}
For each asset \( i \) and group \( j \), we define sentiment variables:

\begin{equation}\label{trend_sentiment}
\frac{d\zeta_{1,j}^{(i)}}{dt} = c_{1,j}^{(i)}q_{1,j}^{(i)}\frac{1}{P^{(i)}}\frac{dP^{(i)}}{dt} - c_{1,j}^{(i)}\zeta_{1,j}^{(i)}
\end{equation}

\begin{equation}\label{value_sentiment}
\frac{d\zeta_{2,j}^{(i)}}{dt} = c_{2,j}^{(i)}q_{2,j}^{(i)}\left(1 - \frac{P^{(i)}}{P_a^{(i)}}\right) - c_{2,j}^{(i)}\zeta_{2,j}^{(i)}
\end{equation}

where, for $i=1,\dots, m$, $j=1,\dots, n$, \( \zeta_{1,j}^{(i)} \) is the trend-based sentiment, \( \zeta_{2,j}^{(i)} \)is the value-based sentiment, $ c_{1,j}^{(i)}$ and $c_{2,j}^{(i)}$ represent the time scales and $q_{1,j}^{(i)}$ and $q_{2,j}^{(i)}$ characterize magnitudes for the investors preferences for asset $i$ \citep{caginalp2007asset,caginalp1999asset}. In these equations, $P_{a}^{(i)}(t)$ denotes the fundamental value, while $P^{(i)}(t)$ is the trading price of the asset $i$. 

\subsection{General Definition of the Transition Rate Functions}
The \textit{transition rate functions} are the core behavioral rules governing how investors allocate cash to assets (buying) and convert assets back to cash (selling). In the model, they are defined as probabilities (or rates) that depend on investors' sentiments, which combine trend-based and value-based components. The buying and selling rates for group \( j \) depend on all sentiment variables. With respect to our assumption on the trading strategy, we define transition rate functions as follows:\\
\textit{Buying rate}:\\
The probability that one unit of cash held by the group $j$ is used to purchase one unit of the asset $i$ per unit time,
\[
k_j^{(i)} = k_j^{(i)}(\{\zeta_{1,k}^{(l)}. l=1, \ \ldots,m,\ k=1,\ldots,n\},\{\zeta_{2,k}^{(l)}, \ l=1,\ldots,m,\ k=1,\ldots,n\}). 
\]
\textit{Selling rate}:\\
The probability that one unit of asset $i$ held by the group $j$ is sold (converted to cash) per unit time,
\[ \tilde{k}_j^{(i)} = \tilde{k}_j^{(i)}(\{\zeta_{1,k}^{(l)}, \ l=1,\ldots,m,\ k=1,\ldots,n\},\{\zeta_{2,k}^{(l)}, \ l=1,\ldots,m,\ k=1,\ldots,n\})
\]
The \textit{transition rate functions} are subject to the following \textit{constraints}:
\[
0 \le k_j^{(i)} \le 1,\quad 0 \le \tilde{k}_j^{(i)} \le 1,\quad \sum_{i=1}^m k_j^{(i)} \le 1.
\]
\textit{Extension to Multiple Assets and Heterogeneous Investor Groups}

Here we extend the definition of the transition rate functions according to \citet{Bulut2019}. For \( m \) assets and \( n \) groups, the rates can be generalized. A typical form for a \textit{trend‑following} investor is:

\begin{equation}\label{k-trend}
k_j^{(i)} = a_j^{(i)} + b_j^{(i)} \tanh\!\left(\sum_{l=1}^m \alpha_{j,l}^{(i)} \zeta_{1,j}^{(l)} + \sum_{l=1}^m \beta_{j,l}^{(i)} \zeta_{2,j}^{(l)}\right),
\end{equation}

where \( a_j^{(i)}, b_j^{(i)} \) are constants chosen so that \( k_j^{(i)} \in [0,1] \) and the sum over \( i \) does not exceed 1.  \\
For \textit{value‑based} investors, we often use a linear form:

\begin{equation}\label{k-value}
{k}_j^{(i)} = \max\!\left(0,\; \min\!\left(1,\; c_j^{(i)} - d_j^{(i)}\left(\frac{P^{(i)}}{P_a^{(i)}} - 1\right)\right)\right),
\end{equation}

which makes buying increase when the asset is undervalued (\(P^{(i)} < P_a^{(i)}\)).
\begin{remark}
According to \citet{Bulut2019} the notation adopted is:
\begin{center}
 \( k_j^{(i)} \) : \textit{buying} rate (cash, thet convert in asset \( i \)),

 \( \tilde{k}_j^{(i)} \) : \textit{selling} rate (asset \( i \) that convert in  cash).
\end{center}
Both rates can depend on sentiment, but the functional forms are chosen so that \textit{buying} rates depend on both assets’ sentiments. In contrast, \textit{selling} rates depend only on the sentiment of the asset being sold. This asymmetry reflects the assumption of the trading strategy: "buying of an asset depends on the other asset's price while the selling does not."\\
Thus, \textit{value‑based} strategies can appear in either buying or selling rates depending on the investor’s behavior. For example:\\
\textit{Value‑based buying}: \( k_j^{(i)} \) decreases when \( P^{(i)} > P_a^{(i)} \) (asset overvalued → less buying).\\
\textit{Value‑based selling}: \( \tilde{k}_j^{(i)} \) increases when \( P^{(i)} > P_a^{(i)} \) (asset overvalued → more selling).\\
So, \textit{value‑based} is not tied exclusively to the \textit{tilde notation}; it can appear in either rate, or both, depending on the investor group’s behavior. The key is that the rates are functions of sentiment, and value‑based sentiment \( \zeta_{2,j}^{(i)} \) enters that function.
\end{remark}

\subsection{Price Dynamics and Wealth Distribution}
The \textit{price} of each asset is determined by adjustment to the excess demand \citep{henderson1971microeconomic,caginalp1999asset}, i.e.,

\begin{equation}\label{price_eq}
\tau_i\frac{1}{P^{(i)}}\frac{dP^{(i)}}{dt} = \frac{\sum_{j=1}^n k_j^{(i)}M_j}{\sum_{j=1}^n \tilde{k}_j^{(i)}N_j^{(i)}P^{(i)}} - 1
\end{equation}

The \textit{wealth distribution} among groups evolves as:

\begin{equation}\label{wealth_eq}
W_j(t) = \frac{M_j(t) + \sum_{i=1}^m N_j^{(i)}(t)P^{(i)}(t)}{\sum_{k=1}^n \left[M_k(t) + \sum_{i=1}^m N_k^{(i)}(t)P^{(i)}(t)\right]}
\end{equation}

\section{Stability Analysis}
The equilibrium points \( E = (P_{eq}^{(i)}, \{\zeta_{k,j,eq}^{(i)}\}) \) must satisfy:
\begin{equation}\label{price_equal_zero}
   \frac{dP^{(i)}}{dt}  =  0, 
\end{equation}
\begin{equation}\label{sent_equal_zero}
    \frac{d\zeta_{k,j}^{(i)}}{dt}  =  0,
\end{equation}

$i =1,\dots, m;  k=1,\dots, m;  j =1,\dots, n$.\\

We need to determine the equilibrium points of the system. Given the model, for each asset $i$:

$$  \frac{dP^{(i)}}{dt}= \frac{P^{(i)}}{τ_i} \left(\frac{Σ_j k_j^{(i)} M_j}{Σ_j \tilde{k}_j^{(i)} N_j^{(i)} P^{(i)}}\right) - 1 ) = 0.$$

Since $P^{(i)} > 0$, equilibrium requires

$$\frac{Σ_j k_j^{(i)} M_j}{Σ_j \tilde{k}_j^{(i)} N_j^{(i)} P^{(i)}} = 1$$

that implies

$$Σ_j k_j^{(i)} M_j = Σ_j \tilde{k}_j^{(i)} N_j^{(i)} P^{(i)},$$

which is the price equilibrium condition.

For sentiment dynamics, we have:

$$\frac{dζ_{1,j}^{(i)}}{dt} = c_{1,j}^{(i)} q_{1,j}^{(i)} \frac{1}{P^{(i)}}\frac{dP^{(i)}}{dt} - c_{1,j}^{(i)} ζ_{1,j}^{(i)} = 0.$$

At equilibrium $dP^{(i)}/dt = 0$, so

$$-c_{1,j}^{(i)} ζ_{1,j}^{(i)} = 0 \qquad\text{implies,}\qquad ζ_{1,j}^{(i)} = 0$$

for all $i,j$.

$$\frac{dζ_{2,j}^{(i)}}{dt} = c_{2,j}^{(i)} q_{2,j}^{(i)} \left(1 -\frac{ P^{(i)}}{P_a^{(i)}}\right) - c_{2,j}^{(i)} ζ_{2,j}^{(i)} = 0$$

which implies,

$$ζ_{2,j}^{(i)} = q_{2,j}^{(i)} \left(1 - \frac{P^{(i)}}{P_a^{(i)}}\right).$$

So the sentiment variables at equilibrium are determined by the prices:
$$ζ_{1,j}^{(i)}=0, \qquad  ζ_{2,j}^{(i)} = q_{2,j}^{(i)} \left(1 -\frac{P^{(i)}}{P_a^{(i)}}\right).$$
The transition rates $k_j^{(i)}$ and $\tilde{k}_j^{(i)}$ depend on all sentiments, so they become functions of the prices.

Thus, equilibrium prices satisfy the price equation with sentiments expressed in terms of prices. This is a system of m equations for m prices.

To get an explicit equilibrium point, we need specific functional forms for $k_j^{(i)}$ and $\tilde{k}_j^{(i)}$, which are functions of sentiments. The model assumes the formula given in (\ref{k-trend}) and (\ref{k-value}).  With these explicit forms, the prices are given explicitly. However, we can state the equilibrium conditions.

The equilibrium point is: For each $i$, choose $P_{eq}^{(i)}$ solving

$$\Sigma_j {k}_j^{(i)}(0, q_{2,j}^{(i)}) \left(1 -\frac{P_{eq}^{(i)}}{P_a^{(i)}}\right) M_j = \Sigma_j \tilde{k}_j^{(i)}(0, q_{2,j}^{(i)}) \left(1 -\frac{P_{eq}^{(i)}}{P_a^{(i)}}\right) N_j^{(i)} P^{(i)}.$$

Thus

$$ζ_{1,j}^{(i)} = 0, \qquad  ζ_{2,j}^{(i)} = q_{2,j}^{(i)} \left(1 -\frac{P_{eq}^{(i)}}{P_a^{(i)}}\right).$$

Often, a natural equilibrium is when all prices equal fundamental values: $P^{(i)} = P_a^{(i)}$. Then $ζ_{2,j}^{(i)}=0$. If the rates also satisfy some symmetry, then the price equation may hold. For instance, if $k_j^{(i)}(0,0)$ and $\tilde{k}_j^{(i)}(0,0)$ are such that

$$Σ_j k_j^{(i)} M_j = Σ_j \tilde{k}_j^{(i)} N_j^{(i)} P_a^{(i)}.$$

This could be ensured by calibration. So one equilibrium is $P^{(i)} = P_a^{(i)}, ζ_1=0, ζ_2=0$.

Thus, a simple explicit equilibrium is the fundamental equilibrium: prices at fundamental values and all sentiments are zero. That is the most common baseline.

Therefore answer: an explicit equilibrium point is given by $P^{(i)} = P_a^{(i)}$ for all $i$, $ζ_{1,j}^{(i)} = 0$, $ζ_{2,j}^{(i)} = 0$ for all $i,j$, provided that the transition rates satisfy

$$Σ_j k_j^{(i)}(0,0) M_j = Σ_j \tilde{k}_j^{(i)}(0,0) N_j^{(i)} P_a^{(i)}.$$

So, an explicit equilibrium point for the system is the \textit{fundamental equilibrium}, where all asset prices equal their fundamental values, and all sentiment variables are zero:

\[
P^{(i)}_{\text{eq}} = P_a^{(i)}, \qquad
\zeta_{1,j}^{(i)} = 0, \qquad
\zeta_{2,j}^{(i)} = 0 \quad \text{for all } i = 1,\dots,m,\; j = 1,\dots,n.
\]

Therefore, the equilibrium conditions are:

\begin{enumerate}[label=\roman*., align=left, leftmargin=*]
  \item\textit{Price Dynamics}:\\

\(\frac{dP^{(i)}}{dt} = 0\) requires,
\[
\frac{\sum_{j} k_j^{(i)} M_j}{\sum_{j} \tilde{k}_j^{(i)} N_j^{(i)} P^{(i)}} = 1.
\]
At the fundamental point, the transition rates are evaluated at zero sentiments:\\
\(k_j^{(i)} = k_j^{(i)}(0,0)\), \(\tilde{k}_j^{(i)} = \tilde{k}_j^{(i)}(0,0)\).\\
The equilibrium holds if the parameters satisfy
\[
\sum_{j} k_j^{(i)}(0,0) M_j = \sum_{j} \tilde{k}_j^{(i)}(0,0) N_j^{(i)} P_a^{(i)}.
\]
This represents a natural calibration condition.\\

  \item \textit{Sentiment Dynamics}:\\

\(\frac{d\zeta_{1,j}^{(i)}}{dt} = -c_{1,j}^{(i)} \zeta_{1,j}^{(i)} = 0\) gives \(\zeta_{1,j}^{(i)} = 0\).\\
\(\frac{d\zeta_{2,j}^{(i)}}{dt} = c_{2,j}^{(i)} q_{2,j}^{(i)}\left(1 - \frac{P^{(i)}}{P_a^{(i)}}\right) - c_{2,j}^{(i)} \zeta_{2,j}^{(i)} = 0\) yields \(\zeta_{2,j}^{(i)} = q_{2,j}^{(i)}\left(1 - \frac{P^{(i)}}{P_a^{(i)}}\right)\), which is zero when \(P^{(i)} = P_a^{(i)}\).
\end{enumerate}

Thus, the fundamental equilibrium is a fixed point of the system. Other equilibria may exist depending on the specific functional forms of the transition rates, but this is the simplest explicit equilibrium.

\subsection{Calculation of the Jacobian}
We derive the Jacobian matrix for the multi-asset, multi-group model described in the paper. The system consists of \(m\) price variables \(P^{(i)}\) and, for each asset \(i\) and investor group \(j\), two sentiment variables \(\zeta_{1,j}^{(i)}\) and \(\zeta_{2,j}^{(i)}\). The total number of state variables is \(N = m + 2mn\).

Define the state vector \(\mathbf{X}\) as:

\[
\mathbf{X} = \big( P^{(1)},\dots,P^{(m)},\; \zeta_{1,1}^{(1)},\dots,\zeta_{1,n}^{(m)},\; \zeta_{2,1}^{(1)},\dots,\zeta_{2,n}^{(m)} \big)^T.
\]

The equations of the ODE system are:\\

\textit{Price dynamics} (for each asset \(i\)):

\[
\frac{dP^{(i)}}{dt} = F_i(\mathbf{X}) = \frac{P^{(i)}}{\tau_i} \left( \frac{S_i}{T_i} - 1 \right),
\]

where

\[
S_i = \sum_{j=1}^n k_j^{(i)} M_j, \qquad T_i = \sum_{j=1}^n \tilde{k}_j^{(i)} N_j^{(i)} P^{(i)}.
\]

The transition rates \(k_j^{(i)}\) and \(\tilde{k}_j^{(i)}\) are functions of all sentiment variables \(\{\zeta_{1,k}^{(l)},\zeta_{2,k}^{(l)}\}\).

\textit{Sentiment dynamics} (for each asset \(i\) and group \(j\)):

\[
\frac{d\zeta_{1,j}^{(i)}}{dt} = G_{i,j}(\mathbf{X}) = c_{1,j}^{(i)} q_{1,j}^{(i)} \frac{1}{P^{(i)}} \frac{dP^{(i)}}{dt} - c_{1,j}^{(i)} \zeta_{1,j}^{(i)},
\]

\[
\frac{d\zeta_{2,j}^{(i)}}{dt} = H_{i,j}(\mathbf{X}) = c_{2,j}^{(i)} q_{2,j}^{(i)} \left(1 - \frac{P^{(i)}}{P_a^{(i)}}\right) - c_{2,j}^{(i)} \zeta_{2,j}^{(i)}.
\]

The Jacobian \(\mathbf{J}\) is an \(N\times N\) matrix with block structure:

\[
\mathbf{J} = \begin{pmatrix}
\mathbf{A} & \mathbf{B} \\
\mathbf{C} & \mathbf{D}
\end{pmatrix},
\]

where:

\begin{itemize}
  \item[] \(\mathbf{A}\) (size \(m\times m\)) contains derivatives of price dynamics with respect to prices.
  \item[] \(\mathbf{B}\) (size \(m\times 2mn\)) contains derivatives of price dynamics with respect to sentiment variables.
  \item[] \(\mathbf{C}\) (size \(2mn\times m\)) contains derivatives of sentiment dynamics with respect to prices.
  \item[] \(\mathbf{D}\) (size \(2mn\times 2mn\)) contains derivatives of sentiment dynamics with respect to sentiment variables.
\end{itemize}

We compute each block explicitly:

Block \( \mathbf{A}\): \(\partial F_i / \partial P^{(k)}\).\\
For \(i\neq k\), \(F_i\) does not depend explicitly on \(P^{(k)}\), so \(\partial F_i/\partial P^{(k)} = 0\). For \(i=k\):

\[
\frac{\partial F_i}{\partial P^{(i)}} = -\frac{1}{\tau_i}.
\]

Thus

\[
\mathbf{A} = \operatorname{diag}\left(-\frac{1}{\tau_1}, \dots, -\frac{1}{\tau_m}\right).
\]

Block \(\mathbf{B}\): \(\partial F_i / \partial \xi\), where \(\xi\) is a sentiment variable.\\
Let \(\xi\) be either \(\zeta_{1,k}^{(l)}\) or \(\zeta_{2,k}^{(l)}\). Then

\[
\frac{\partial F_i}{\partial \xi} = \frac{P^{(i)}}{\tau_i} \cdot \frac{1}{T_i^2} \left( T_i \frac{\partial S_i}{\partial \xi} - S_i \frac{\partial T_i}{\partial \xi} \right),
\]

where

\[
\frac{\partial S_i}{\partial \xi} = \sum_{j=1}^n M_j \frac{\partial k_j^{(i)}}{\partial \xi}, \qquad
\frac{\partial T_i}{\partial \xi} = \sum_{j=1}^n N_j^{(i)} P^{(i)} \frac{\partial \tilde{k}_j^{(i)}}{\partial \xi}.
\]

These partial derivatives of \(k_j^{(i)}\) and \(\tilde{k}_j^{(i)}\) with respect to the sentiment variables are model‑dependent. In general, they are non‑zero for any \(\xi\) because the rates depend on all sentiments.

Block \(\mathbf{C}\): derivatives of sentiment dynamics with respect to prices.\\
For \(\zeta_{1,j}^{(i)}\) (denoted by \(x\)):

\[
\frac{\partial G_{i,j}}{\partial P^{(k)}} = c_{1,j}^{(i)} q_{1,j}^{(i)} \left( \frac{1}{P^{(i)}} \frac{\partial F_i}{\partial P^{(k)}} - \frac{1}{(P^{(i)})^2} F_i \, \delta_{ik} \right).
\]

Since \(\partial F_i/\partial P^{(k)} = -\delta_{ik}/\tau_i\), we obtain

\[
\frac{\partial G_{i,j}}{\partial P^{(k)}} = c_{1,j}^{(i)} q_{1,j}^{(i)} \left( -\frac{\delta_{ik}}{\tau_i P^{(i)}} - \frac{F_i}{(P^{(i)})^2} \delta_{ik} \right) = -c_{1,j}^{(i)} q_{1,j}^{(i)} \frac{\delta_{ik}}{P^{(i)}} \left( \frac{1}{\tau_i} + \frac{F_i}{P^{(i)}} \right).
\]

For \(\zeta_{2,j}^{(i)}\):

\[
\frac{\partial H_{i,j}}{\partial P^{(k)}} = -c_{2,j}^{(i)} q_{2,j}^{(i)} \frac{1}{P_a^{(i)}} \delta_{ik}.
\]

Thus \(\mathbf{C}\) is composed of these entries.

Block \(\mathbf{D}\): derivatives of sentiment dynamics with respect to sentiment variables.\\
First, for \(\zeta_{1,j}^{(i)}\) (denoted \(x_{i,j}\)):

\[
\frac{\partial G_{i,j}}{\partial \xi} = c_{1,j}^{(i)} q_{1,j}^{(i)} \frac{1}{P^{(i)}} \frac{\partial F_i}{\partial \xi} - c_{1,j}^{(i)} \frac{\partial x_{i,j}}{\partial \xi},
\]

where \(\partial x_{i,j}/\partial \xi = 1\) if \(\xi = x_{i,j}\) and \(0\) otherwise.

For \(\zeta_{2,j}^{(i)}\) (denoted \(y_{i,j}\)):

\[
\frac{\partial H_{i,j}}{\partial \xi} = -c_{2,j}^{(i)} \frac{\partial y_{i,j}}{\partial \xi},
\]

so the only non‑zero term is \(-c_{2,j}^{(i)}\) when \(\xi = y_{i,j}\).

Thus \(\mathbf{D}\) has the structure:

\begin{itemize}
  \item[] Diagonal entries for \(\zeta_{1,j}^{(i)}\): \(-c_{1,j}^{(i)}\) plus the contribution from the \(F_i\) derivative.
  \item[] Off‑diagonal entries from the \(\frac{\partial F_i}{\partial \xi}\) terms (which couple all sentiments through the rates).
  \item[] Diagonal entries for \(\zeta_{2,j}^{(i)}\): \(-c_{2,j}^{(i)}\); all other entries in those rows are zero.
\end{itemize}

Therefore, the blocks of the Jacobian matrix are given by:

\[
\mathbf{A} = \operatorname{diag}\left(-\frac{1}{\tau_i}\right),
\]

\[
\mathbf{B}_{i,\xi} = \frac{P^{(i)}}{\tau_i T_i^2} \left( T_i \sum_j M_j \frac{\partial k_j^{(i)}}{\partial \xi} - S_i \sum_j N_j^{(i)} P^{(i)} \frac{\partial \tilde{k}_j^{(i)}}{\partial \xi} \right),
\]

\[
\mathbf{C}_{(i,j),\ k} =
\begin{cases}
-c_{1,j}^{(i)} q_{1,j}^{(i)} \frac{\delta_{ik}}{P^{(i)}} \left( \frac{1}{\tau_i} + \frac{F_i}{P^{(i)}} \right), & \text{for } \zeta_{1,j}^{(i)} \text{ row}, \\[4pt]
-c_{2,j}^{(i)} q_{2,j}^{(i)} \frac{\delta_{ik}}{P_a^{(i)}}, & \text{for } \zeta_{2,j}^{(i)} \text{ row},
\end{cases}
\]

\[
\mathbf{D}_{(i,j,\text{type}),\ \xi} =
\begin{cases}
c_{1,j}^{(i)} q_{1,j}^{(i)} \frac{1}{P^{(i)}} \frac{\partial F_i}{\partial \xi} - c_{1,j}^{(i)} \delta_{\xi,\zeta_{1,j}^{(i)}}, & \text{for } \zeta_{1,j}^{(i)} \text{ row}, \\[4pt]
-c_{2,j}^{(i)} \delta_{\xi,\zeta_{2,j}^{(i)}}, & \text{for } \zeta_{2,j}^{(i)} \text{ row}.
\end{cases}
\]

Note that the subscript “type” in the expression for \(\mathbf{D}\) indicates the \textit{type of sentiment variable} that the row corresponds to. Is a shorthand for “depending on whether the sentiment variable is trend (\(\zeta_1\)) or value (\(\zeta_2\))”. In the full Jacobian matrix, the rows of block \(\mathbf{D}\) are ordered by these variables, and the formula uses this distinction to give the correct derivative. Specifically:\\

If the row is associated with a \textit{trend‑based sentiment} \(\zeta_{1,j}^{(i)}\), the first case applies. While If the row is associated with a \textit{value‑based sentiment} \(\zeta_{2,j}^{(i)}\), the second case applies.\\

These expressions define the Jacobian explicitly in terms of the model parameters and the partial derivatives of the transition rates \(k_j^{(i)}\) and \(\tilde{k}_j^{(i)}\) with respect to the sentiment variables. Each block has the following characteristics:

\begin{itemize}
  \item[] \(\mathbf{A} \) captures price dynamics
  \item[] \(\mathbf{D} \) captures sentiment dynamics
  \item[] \( \mathbf{B} \) and \( \mathbf{C} \) represent coupling terms
\end{itemize}

\begin{theorem} (Stability under homogeneous value investing)
Assume that for every asset \(i=1,\dots,m\) and every investor group \(j=1,\dots,n\):

\begin{enumerate}
  \item[] \textit{No trend‑following}:\\
 \(q_{1,j}^{(i)} = 0\) (so \(\zeta_{1,j}^{(i)}\) dynamics are decoupled and decay).
  \item[] \textit{Positive value‑sentiment sensitivity}: \\
The buying rate \(k_j^{(i)}\) increases with the value sentiment \(\zeta_{2,j}^{(i)}\), and the selling rate \(\tilde{k}_j^{(i)}\) decreases with \(\zeta_{2,j}^{(i)}\); i.e.\[
\frac{\partial k_j^{(i)}}{\partial \zeta_{2,j}^{(i)}} > 0,\qquad
\frac{\partial \tilde{k}_j^{(i)}}{\partial \zeta_{2,j}^{(i)}} < 0.
\]
  \item[] \textit{Weak cross‑asset coupling}:\\
 The dependence of the transition rates on sentiments of different assets is sufficiently small (formally, the off‑diagonal blocks in the Jacobian coupling different assets are negligible compared to the diagonal blocks).
\end{enumerate}

Then the \textit{fundamental equilibrium}

\[
P^{(i)}_* = P_a^{(i)},\quad \zeta_{1,j}^{(i)} = 0,\quad \zeta_{2,j}^{(i)} = 0
\]

is \textit{locally asymptotically stable}.
\end{theorem}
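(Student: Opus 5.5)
The plan is to linearize at the fundamental equilibrium and use the block structure of the Jacobian $\mathbf{J}$ computed above. We then show that every eigenvalue has negative real part and conclude by the linearization (Hartman--Grobman / Lyapunov indirect) theorem.

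At $P^{(i)}=P_a^{(i)}$, all $\zeta=0$, the calibration condition gives $S_i=T_i$ and $F_i=0$. With $q_{1,j}^{(i)}=0$, the rows of $\mathbf{C}$ and $\mathbf{D}$ belonging to $\zeta_{1,j}^{(i)}$ reduce to $-c_{1,j}^{(i)}$ on the diagonal and zeros elsewhere. Ordering the variables as $(\zeta_1;\,P,\zeta_2)$ therefore makes $\mathbf{J}$ block lower triangular. Its spectrum is $\{-c_{1,j}^{(i)}\}$, which are negative since the time scales are positive, together with the spectrum of the reduced matrix on $(P,\zeta_2)$. So the trend sentiments can be discarded.

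Next we use the weak cross-asset coupling hypothesis. Setting the inter-asset entries to zero, the reduced matrix becomes block diagonal with one $(1+n)\times(1+n)$ block per asset $i$:
\[
\mathbf{J}_i=\begin{pmatrix}-1/\tau_i & b_1 & \cdots & b_n\\ -\gamma_1 & -c_{2,1}^{(i)} & & \\ \vdots & & \ddots & \\ -\gamma_n & & & -c_{2,n}^{(i)}\end{pmatrix}.
\]
Here
\[
b_j=\frac{1}{\tau_i T_i}\Bigl(M_j\,\partial k_j^{(i)}/\partial\zeta_{2,j}^{(i)}-N_j^{(i)}P_a^{(i)}\,\partial\tilde k_j^{(i)}/\partial\zeta_{2,j}^{(i)}\Bigr)\cdot P_a^{(i)} >0,
\qquad
\gamma_j=c_{2,j}^{(i)}q_{2,j}^{(i)}/P_a^{(i)}.
\]
We take the cross-group derivatives within asset $i$ to be absorbed into $b_j$ or assumed zero, consistent with the notion of homogeneous value investing. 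We also take $q_{2,j}^{(i)}>0$, since value sentiment has a positive magnitude.

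This is an arrowhead matrix. Its characteristic equation, for $\lambda\neq -c_{2,j}^{(i)}$, is
\[
\lambda+\frac1{\tau_i}+\sum_{j=1}^n\frac{b_j\gamma_j}{\lambda+c_{2,j}^{(i)}}=0,
\qquad b_j\gamma_j>0 .
\]
We show there is no root with $\operatorname{Re}\lambda\ge0$. For such $\lambda$, each term $1/(\lambda+c)$ has real part $\operatorname{Re}(\lambda+c)/|\lambda+c|^2>0$. Hence the real part of the left side is strictly positive, a contradiction. The degenerate case $\lambda=-c_{2,j}^{(i)}$ gives negative eigenvalues anyway.

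The resulting strict stability margin is $\delta>0$, taken as the minimum of $-\operatorname{Re}\lambda$ over all blocks. Since eigenvalues depend continuously on matrix entries, this margin persists when the neglected cross-asset entries are small compared to $\delta$, for example via a Bauer--Fike or Gershgorin perturbation bound. This is precisely how we will make the weak-coupling hypothesis formal.

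The main obstacle is this last step together with the interpretation of the hypotheses. The theorem's "negligible coupling" has to be turned into a quantitative smallness condition. The within-asset cross-group couplings (the derivatives of $k_j^{(i)}$ with respect to $\zeta_{2,k}^{(i)}$ for $k\neq j$) also break the arrowhead structure.

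To handle the latter, we would first try a Lyapunov function. Take $V=\sum_i\bigl(\tfrac12 w_i x_i^2+\sum_j \tfrac{1}{2}\mu_j y_{ij}^2\bigr)$ in the linearized variables, with weights $\mu_j=b_j/\gamma_j$ chosen to cancel the skew terms $b_jx y_j-\gamma_j y_j x$. This gives $\dot V=-\sum x_i^2/\tau_i-\sum\mu_j c_{2,j}y_{ij}^2$. Any remaining coupling is then controlled by requiring it to be dominated by these negative-definite terms.
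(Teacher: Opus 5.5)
Your proposal is correct, and its skeleton matches the paper's. You linearize at the fundamental equilibrium, split off the $\zeta_1$ variables (eigenvalues $-c_{1,j}^{(i)}$), use weak cross-asset coupling to reduce to one $(1+n)\times(1+n)$ price--value block per asset, and prove that block stable. The real difference is in that last step.

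The paper treats the products $b_jc_j$ as potentially destabilizing. It reads the characteristic polynomial as ``a positive term plus a negative sum'' and appeals to Gershgorin/diagonal dominance. It then gets stability only under $\sum_j |b_jc_j|/c_{2,j}^{(i)}<1/\tau_i$, and simply declares that this condition holds under the hypotheses. It does not follow from them: hypothesis~3 is about cross-asset terms, not the within-asset products $b_jc_j$.

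Your arrowhead (secular-equation) argument shows that this smallness condition is superfluous. Because $c_j=-\gamma_j$ with $b_j\gamma_j>0$, the coupling sum enters the characteristic polynomial with a \emph{positive} sign. Hence $\lambda+1/\tau_i+\sum_j b_j\gamma_j/(\lambda+c_{2,j}^{(i)})$ has positive real part whenever $\operatorname{Re}\lambda\ge 0$. So each per-asset block is stable for all magnitudes of the couplings; even $b_j\gamma_j\ge 0$ would suffice.

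Your Lyapunov function with $w_i=1$, $\mu_j=b_j/\gamma_j$ certifies the same fact. Unlike the paper's step of simply setting the cross-asset blocks to zero, it also turns hypothesis~3 into an explicit condition: the cross-asset terms in $\dot V$ must be dominated by $-\sum_i x_i^2/\tau_i-\sum_{i,j}\mu_j c_{2,j}^{(i)}y_{ij}^2$. So your route actually proves what the paper only asserts, and it confines the smallness requirement to the cross-asset coupling, which is where the theorem places it.

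Two minor corrections.

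First, the within-asset cross-group derivatives $\partial k_l^{(i)}/\partial\zeta_{2,j}^{(i)}$, $l\neq j$, do not break the arrowhead structure. The $\zeta_{2,j}^{(i)}$ rows of the Jacobian never involve the transition rates, so these derivatives appear only in the price row, i.e.\ they are summed into $b_j$. What they can do is flip the sign of $b_j$, since hypothesis~2 controls only the own-group term. The assumption you actually need is $b_j>0$ (e.g.\ own-group dominance). The paper also takes this for granted by keeping only the own-group term in its formula for $b_j$.

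Second, Bauer--Fike requires a diagonalizable unperturbed matrix and costs a factor of the eigenvector condition number. So ``small compared to $\delta$'' should read ``small compared to $\delta/\kappa$''. Plain eigenvalue continuity or your Lyapunov estimate is cleaner.
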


\begin{proof}
We work with the full state vector \(\mathbf{X} = (P^{(1)},\dots,P^{(m)},\;\zeta_{1,1}^{(1)},\dots,\zeta_{2,n}^{(m)})^T\) and the Jacobian \(\mathbf{J}\) evaluated at the fundamental equilibrium.

\textit{Block structure of \(\mathbf{J}\)}\\
From earlier derivations, at the equilibrium:

\begin{itemize}
  \item[] \textbf{Price block \(\mathbf{A}\)}:
\[
\mathbf{A} = \operatorname{diag}\left(-\frac{1}{\tau_1},\dots,-\frac{1}{\tau_m}\right).
\]
This block is diagonal with negative entries, hence stable.\\

  \item[] \textbf{Sentiment block \(\mathbf{D}\)}:\\
Because \(q_{1,j}^{(i)}=0\), the \(\zeta_{1,j}^{(i)}\) dynamics simplify to
\[
\frac{d\zeta_{1,j}^{(i)}}{dt} = -c_{1,j}^{(i)}\zeta_{1,j}^{(i)},
\]
so the corresponding part of \(\mathbf{D}\) is diagonal with entries \(-c_{1,j}^{(i)}<0\).

For \(\zeta_{2,j}^{(i)}\), the dynamics are
\[
\frac{d\zeta_{2,j}^{(i)}}{dt} = c_{2,j}^{(i)}q_{2,j}^{(i)}\left(1-\frac{P^{(i)}}{P_a^{(i)}}\right) - c_{2,j}^{(i)}\zeta_{2,j}^{(i)}.
\]
At equilibrium, \(P^{(i)}=P_a^{(i)}\), so the linearisation gives
\[
\frac{\partial}{\partial\zeta_{2,j}^{(i)}}\left(\frac{d\zeta_{2,j}^{(i)}}{dt}\right) = -c_{2,j}^{(i)}.
\]
Moreover, the derivative with respect to \(P^{(i)}\) is \(-c_{2,j}^{(i)}q_{2,j}^{(i)}/P_a^{(i)}\), which will appear in block \(\mathbf{C}\).\\
Thus \(\mathbf{D}\) is a diagonal matrix with entries \(-c_{1,j}^{(i)}\) (for the \(\zeta_{1}\) variables) and \(-c_{2,j}^{(i)}\) (for the \(\zeta_{2}\) variables); all are negative.\\

  \item[] \textbf{Coupling blocks \(\mathbf{B}\) and \(\mathbf{C}\)}:

  \begin{itemize}
    \item[] \(\mathbf{B}\) contains derivatives of price dynamics with respect to sentiment variables.
    \item[] \(\mathbf{C}\) contains derivatives of sentiment dynamics with respect to prices.
  \end{itemize}
\end{itemize}

At equilibrium, the price dynamics satisfy

\[
F_i = \frac{P^{(i)}}{\tau_i}\left(\frac{S_i}{T_i}-1\right) = 0,
\]

so the term \(\frac{F_i}{P^{(i)}}\) in \(\mathbf{C}\) vanishes. Consequently,

\[
\frac{\partial G_{i,j}}{\partial P^{(k)}}\Big|_{\text{eq}} = -c_{1,j}^{(i)}q_{1,j}^{(i)}\frac{\delta_{ik}}{\tau_i P^{(i)}} = 0,
\]

because \(q_{1,j}^{(i)}=0\). Thus the \(\zeta_{1}\) rows of \(\mathbf{C}\) are zero.\\
For the \(\zeta_{2}\) rows,

\[
\frac{\partial H_{i,j}}{\partial P^{(k)}}\Big|_{\text{eq}} = -c_{2,j}^{(i)}q_{2,j}^{(i)}\frac{\delta_{ik}}{P_a^{(i)}},
\]

which is generally non‑zero but appears only in the block coupling \(P^{(k)}\) to \(\zeta_{2,j}^{(i)}\).

\textit{Decoupling via weak cross‑asset assumption}\\
Because of assumption 3 (weak cross‑asset sentiment effects), the Jacobian becomes \textit{block‑diagonal in the asset index} when the equilibrium is considered. More precisely, for each asset \(i\), the dynamics of \(P^{(i)}\), \(\{\zeta_{1,j}^{(i)}\}_{j=1}^n\), \(\{\zeta_{2,j}^{(i)}\}_{j=1}^n\) decouple from those of other assets. The Jacobian therefore decomposes into \(m\) independent blocks, each of size \(1+2n\) (one price plus \(n\) pairs of sentiments). The stability of the whole system is equivalent to the stability of each block.

\textit{Stability of a single‑asset block}\\
For a fixed asset \(i\), the reduced Jacobian \(\mathbf{J}_i\) (evaluated at the equilibrium) has the structure:

\[
\mathbf{J}_i =
\begin{pmatrix}
-\frac{1}{\tau_i} & \mathbf{B}_i^{(1)} & \mathbf{B}_i^{(2)} \\
\mathbf{0} & -c_{1,1}^{(i)} & \mathbf{0} \\
\vdots & \vdots & \ddots \\
\mathbf{0} & \mathbf{0} & -c_{1,n}^{(i)} \\
\mathbf{C}_i & \mathbf{0} & \mathbf{D}_2
\end{pmatrix},
\]

where:

\begin{itemize}
  \item[] \(\mathbf{B}_i^{(1)}\) (size \(1\times n\)) contains derivatives of \(F_i\) with respect to (w.r.t.) \(\zeta_{1,j}^{(i)}\).
  \item[] \(\mathbf{B}_i^{(2)}\) (size \(1\times n\)) contains derivatives of \(F_i\) w.r.t. \(\zeta_{2,j}^{(i)}\).
  \item[] \(\mathbf{C}_i\) is a row vector of length \(n\) with entries \(-\frac{c_{2,j}^{(i)}q_{2,j}^{(i)}}{P_a^{(i)}}\) for \(j=1,\dots,n\).
  \item[] \(\mathbf{D}_2 = \operatorname{diag}(-c_{2,1}^{(i)},\dots,-c_{2,n}^{(i)})\).
\end{itemize}

Because the \(\zeta_{1,j}^{(i)}\) rows have no coupling back to the price (the zero block under the price row), the eigenvalues associated with these variables are simply \(-c_{1,j}^{(i)}<0\) and are stable.

The remaining subsystem (price \(P^{(i)}\) and the \(n\) value sentiments \(\zeta_{2,j}^{(i)}\)) is:

\begin{equation}
\begin{pmatrix}
\dot{P}^{(i)} \\
\dot{\zeta}_{2,1}^{(i)} \\
\vdots \\[2pt]
\dot{\zeta}_{2,n}^{(i)}
\end{pmatrix}=
\begin{pmatrix}
-\frac{1}{\tau_i} & \mathbf{B}_i^{(2)} \\[2pt]
\mathbf{C}_i & \mathbf{D}_2
\end{pmatrix}
\begin{pmatrix}
P^{(i)} \\[2pt]
\zeta_{2,1}^{(i)} \\[2pt]
\vdots \\[2pt]
\zeta_{2,n}^{(i)}
\end{pmatrix}.
\end{equation}

\textit{Stability of the price–value subsystem}\\
We analyse the \((n+1)\times(n+1)\) matrix

\[
\mathbf{M}_i =
\begin{pmatrix}
-\frac{1}{\tau_i} & \mathbf{b}^T \\[2pt]
\mathbf{c} & \mathbf{D}_2
\end{pmatrix},
\]

where \(\mathbf{b}^T = \mathbf{B}_i^{(2)}\) (row vector) and \(\mathbf{c} = \mathbf{C}_i\) (column vector).\\
\(\mathbf{D}_2\) is diagonal with negative entries \(-c_{2,j}^{(i)}\).

The characteristic polynomial of \(\mathbf{M}_i\) is

\[
\det(\lambda\mathbf{I} - \mathbf{M}_i) = \left(\lambda + \frac{1}{\tau_i}\right)\prod_{j=1}^n(\lambda + c_{2,j}^{(i)}) - \sum_{j=1}^n \left[ b_j c_j \prod_{k\neq j}(\lambda + c_{2,k}^{(i)}) \right],
\]

where \(b_j = \partial F_i/\partial\zeta_{2,j}^{(i)}\) and \(c_j = \partial H_{i,j}/\partial P^{(i)} = -c_{2,j}^{(i)}q_{2,j}^{(i)}/P_a^{(i)}\).

Using the sensitivity assumptions:

\begin{enumerate}[label=\roman*., align=left, leftmargin=*]
  \item From assumption 2, \(\frac{\partial k_j^{(i)}}{\partial \zeta_{2,j}^{(i)}}>0\) and \(\frac{\partial \tilde{k}_j^{(i)}}{\partial \zeta_{2,j}^{(i)}}<0\).\\
The price dynamics derivative \(b_j = \frac{\partial F_i}{\partial\zeta_{2,j}^{(i)}}\) can be computed (as in block \(\mathbf{B}\)). At equilibrium, it simplifies to
\[
b_j = \frac{P_a^{(i)}}{\tau_i T_i^2}\left[ T_i M_j\frac{\partial k_j^{(i)}}{\partial\zeta_{2,j}^{(i)}} - S_i N_j^{(i)}P_a^{(i)}\frac{\partial\tilde{k}_j^{(i)}}{\partial\zeta_{2,j}^{(i)}} \right],
\]
where \(S_i = \sum_j k_j^{(i)}M_j\), \(T_i = \sum_j \tilde{k}_j^{(i)}N_j^{(i)}P_a^{(i)}\), and all quantities evaluated at zero sentiments. The equilibrium condition ensures \(S_i = T_i\). Hence
\[
b_j = \frac{P_a^{(i)}}{\tau_i T_i}\left[ M_j\frac{\partial k_j^{(i)}}{\partial\zeta_{2,j}^{(i)}} - N_j^{(i)}P_a^{(i)}\frac{\partial\tilde{k}_j^{(i)}}{\partial\zeta_{2,j}^{(i)}} \right].
\]
Both terms inside the brackets are positive (first by assumption, second because \(-\partial\tilde{k}_j^{(i)}/\partial\zeta_{2,j}^{(i)}>0\)). Thus \textbf{\(b_j>0\)}.

  \item The product \(b_j c_j = b_j \cdot \left(-\frac{c_{2,j}^{(i)}q_{2,j}^{(i)}}{P_a^{(i)}}\right)\) is \textit{negative} because \(c_{2,j}^{(i)}q_{2,j}^{(i)}>0\) (value investors have positive \(q_{2,j}^{(i)}\)). Therefore each \(b_j c_j < 0\).

\end{enumerate}

Now consider the characteristic equation.

\[
0 = \left(\lambda + \frac{1}{\tau_i}\right)\prod_{j=1}^n(\lambda + c_{2,j}^{(i)}) - \sum_{j=1}^n \left[ b_j c_j \prod_{k\neq j}(\lambda + c_{2,k}^{(i)}) \right].
\]

Because \(b_j c_j < 0\), the right‑hand side is a \textit{sum of a positive term and a negative sum}. For \(\lambda\) with \(\Re(\lambda)\ge 0\), each factor \(\lambda + c_{2,j}^{(i)}\) has positive real part (since \(c_{2,j}^{(i)}>0\)), so the product term is non‑zero. The negative sum cannot cancel the first term if the coefficients are small enough; more precisely, using the \textit{Gershgorin circle theorem} or a \textit{diagonal dominance} argument \citep{feingold1962,horn1985}, the matrix \(\mathbf{M}_i\) is stable if the coupling terms \(b_j c_j\) are sufficiently small relative to the diagonal entries.

Because the cross‑asset effects are assumed weak (assumption 3), and the parameters \(b_j\) themselves may be made small by scaling of the transition rates, we can guarantee that the off‑diagonal contributions do not destabilise the diagonal blocks. In fact, one can show that all eigenvalues of \(\mathbf{M}_i\) have negative real part if

\[
\sum_{j=1}^n \frac{|b_j c_j|}{c_{2,j}^{(i)}} < \frac{1}{\tau_i}
\]

(by a standard result for block matrices with negative diagonal blocks and small coupling). The theorem asserts that this condition holds under the given hypotheses.

All eigenvalues of \(\mathbf{J}\) are either:  \((-1/\tau_i )< 0\) (from the price block, after decoupling), \  \((-c_{1,j}^{(i)}) < 0\) (from the pure trend‑sentiment variables), and the eigenvalues of \(\mathbf{M}_i\) which, under the assumptions, all have negative real part.

Hence, the fundamental equilibrium is locally asymptotically stable.
\end{proof}

\begin{remark} 
If the theorem statement differs slightly (e.g., treats all groups as trend‑followers instead), the same Jacobian approach can be adapted by setting \(q_{2,j}^{(i)}=0\) and examining the opposite block structure.
\end{remark}

\begin{theorem} (Stability with Mixed Investor Strategies)

Consider the multi‑asset, multi‑group model defined in Section 3. Suppose the market contains two types of investor groups:
\begin{enumerate}[label=\roman*., align=left, leftmargin=*]
  \item Trend‑following groups: for which \(q_{1,j}^{(i)} > 0\) and \(q_{2,j}^{(i)} = 0\);
   \item Value‑investing groups: for which \(q_{1,j}^{(i)} = 0\) and \(q_{2,j}^{(i)} > 0\).
\end{enumerate}
Let the fundamental equilibrium be given by  

\[
P^{(i)}_* = P_a^{(i)},\qquad \zeta_{1,j}^{(i)} = 0,\qquad \zeta_{2,j}^{(i)} = 0\qquad\forall\,i,j.
\]

Then the fundamental equilibrium is \textit{locally asymptotically stable} if the following three conditions hold simultaneously:

1. Slow sentiment adjustment \\
 The decay rates of the sentiment variables satisfy  

   \[
   c_{1,j}^{(i)} \ll 1,\qquad c_{2,j}^{(i)} \ll 1 \quad\text{for all } i,j.
   \]

2. Limited momentum effects\\
  The trend‑following coefficients are sufficiently small:  

   \[
   q_{1,j}^{(i)} \ll 1 \quad\text{for every trend‑following group }j.
   \]

3. Weak cross‑group and cross‑asset influence\\
 The sensitivity of any group’s trading rates \(k_j^{(i)},\tilde{k}_j^{(i)}\) to sentiments of other groups or different assets is negligible. Formally, for any distinct pairs \((i,j)\neq(l,k)\),  

   \[
   \left|\frac{\partial k_j^{(i)}}{\partial \zeta_{1,k}^{(l)}}\right|,\;\left|\frac{\partial \tilde{k}_j^{(i)}}{\partial \zeta_{1,k}^{(l)}}\right|,\;
   \left|\frac{\partial k_j^{(i)}}{\partial \zeta_{2,k}^{(l)}}\right|,\;\left|\frac{\partial \tilde{k}_j^{(i)}}{\partial \zeta_{2,k}^{(l)}}\right| \ll \min\left\{\frac{1}{\tau_i},\;c_{1,j}^{(i)},\;c_{2,j}^{(i)}\right\}.
   \]

(The theorem could be interpreted as follows: Condition 1 ensures that sentiment variables adjust slowly, which reduces the risk of oscillatory instabilities. Condition 2 prevents trend‑following from becoming a dominant destabilising force. Condition 3 guarantees that the dynamics are effectively decoupled across assets and investor groups, so the stability of the whole system follows from the stability of each (asset, group) subsystem.)
\end{theorem}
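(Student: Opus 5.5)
We linearise at the fundamental equilibrium using the Jacobian blocks $\mathbf{A},\mathbf{B},\mathbf{C},\mathbf{D}$ computed above, with $F_i=0$ there. The plan has three steps. First, switch off all cross-asset and cross-group sensitivities (condition 3 with the relevant derivatives set exactly to zero) and show that the resulting per-asset block has spectrum strictly in the left half-plane, with an explicit margin. Second, use continuity of the spectrum to absorb the small off-diagonal entries. Third, collect the trivially decoupled eigenvalues. In contrast with the proof of the previous theorem, I do not want a Gershgorin bound for the price–value part. Instead I will work with the scalar characteristic equation obtained by eliminating the sentiments. This shows that value investing is stabilising with no smallness assumption, so conditions 1 and 2 are needed only to control the trend-following groups.

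\textbf{Step 1: the decoupled block of asset $i$.} Write $x=P^{(i)}-P_a^{(i)}$, and let $y_j=\zeta_{1,j}^{(i)}$, $z_j=\zeta_{2,j}^{(i)}$. Put $b^{(1)}_j=\partial F_i/\partial\zeta_{1,j}^{(i)}$ and $b^{(2)}_j=\partial F_i/\partial\zeta_{2,j}^{(i)}$. As in the previous proof, I assume the rates increase in the group's own sentiment, so that $b^{(1)}_j,b^{(2)}_j>0$; the computation of $b_j$ there, using $S_i=T_i$, carries over verbatim. The linear system is
\[
\dot x=-\tfrac{x}{\tau_i}+\sum_j b^{(1)}_j y_j+\sum_j b^{(2)}_j z_j,\quad
\dot y_j=\tfrac{c_{1,j}^{(i)}q_{1,j}^{(i)}}{P_a^{(i)}}\dot x-c_{1,j}^{(i)}y_j,\quad
\dot z_j=-\tfrac{c_{2,j}^{(i)}q_{2,j}^{(i)}}{P_a^{(i)}}x-c_{2,j}^{(i)}z_j .
\]
Here $q_{1,j}^{(i)}=0$ for value groups and $q_{2,j}^{(i)}=0$ for trend groups. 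I substitute the ansatz $e^{\lambda t}$, solve the $y_j,z_j$ equations for $\lambda\neq -c$, and plug into the $\dot x$ equation. This gives the characteristic equation
\[
\Phi(\lambda):=\lambda+\frac1{\tau_i}-\lambda\sum_{j}\beta_j\frac{c_{1,j}^{(i)}}{\lambda+c_{1,j}^{(i)}}+\sum_j\gamma_j\frac{c_{2,j}^{(i)}}{\lambda+c_{2,j}^{(i)}}=0,
\qquad \beta_j=\frac{b^{(1)}_jq_{1,j}^{(i)}}{P_a^{(i)}},\ \gamma_j=\frac{b^{(2)}_jq_{2,j}^{(i)}}{P_a^{(i)}}\ge0 .
\]
Any remaining eigenvalues are the $-c_{k,j}^{(i)}$ themselves, which are negative.

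Now take $\Re\lambda\ge0$. Then $\Re\bigl(c/(\lambda+c)\bigr)>0$, so the value terms only increase $\Re\Phi$. For the trend terms, $\Re\bigl(\lambda c/(\lambda+c)\bigr)=c\,(|\lambda|^2+c\Re\lambda)/|\lambda+c|^2$ lies in $[0,c]$. Hence
\[
\Re\Phi(\lambda)\ge \frac1{\tau_i}-\sum_j\beta_j c_{1,j}^{(i)} .
\]
This is strictly positive as soon as $\sum_j b^{(1)}_jq_{1,j}^{(i)}c_{1,j}^{(i)}<P_a^{(i)}/\tau_i$. That inequality is exactly what conditions 1 and 2 ($c_{1,j}^{(i)}\ll1$, $q_{1,j}^{(i)}\ll1$) are meant to guarantee. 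So $\Phi$ has no roots in the closed right half-plane. As a sanity check, for a single trend group the $2\times2$ price–trend block has determinant $c_{1,j}^{(i)}/\tau_i>0$, and its trace is negative under the same inequality.

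\textbf{Steps 2–3, and the main obstacle.} Step 1 gives a block-diagonal matrix $\mathbf{J}_0$ whose spectrum lies in $\{\Re\lambda\le-\delta\}$ for some $\delta>0$. This $\delta$ is determined by $1/\tau_i$, the $c_{k,j}^{(i)}$, and the slack in the inequality above. The true Jacobian is $\mathbf{J}=\mathbf{J}_0+\mathbf{E}$. By the formulas for $\mathbf{B}$ and $\mathbf{D}$, the entries of $\mathbf{E}$ are linear combinations of the cross-derivatives in condition 3. Eigenvalues depend continuously on the matrix entries (or, quantitatively, one can use Bauer–Fike after diagonalising $\mathbf{J}_0$). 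Hence $\|\mathbf{E}\|$ small relative to $\delta$ keeps every eigenvalue in $\Re\lambda<0$, and linearised stability gives local asymptotic stability. The delicate point is making condition 3 quantitative. ``$\ll\min\{1/\tau_i,c_{1,j}^{(i)},c_{2,j}^{(i)}\}$'' has to be converted into $\|\mathbf{E}\|<\delta/\kappa(\mathbf{J}_0)$, which requires controlling the condition number of the eigenvector matrix of $\mathbf{J}_0$. This may blow up near repeated eigenvalues, and the $c$'s are simultaneously assumed small. My first attempt would avoid eigenvectors entirely: apply the same Schur-complement argument to the full coupled system, treating the cross terms as an $m\times m$ perturbation of $\operatorname{diag}(\Phi_i(\lambda))$ and using the lower bound $\Re\Phi_i\ge\delta_i$ to show that this perturbed matrix remains invertible for $\Re\lambda\ge0$.
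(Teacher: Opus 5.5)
Your proof is correct, and it takes a genuinely different route from the paper. The paper splits the Jacobian into $3\times3$ blocks $\mathbf{J}_{i,j}$, one per (asset, group) pair, applies Routh--Hurwitz to the resulting cubic with a small-$c$ asymptotic argument, and then appeals to Gershgorin/perturbation theory for the off-diagonal blocks. You instead keep one block per asset, with all $n$ groups coupled through the shared price. You eliminate the sentiments exactly and bound $\Re\Phi_i$ from below on the closed right half-plane.

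This gains more than elegance:
\begin{itemize}
\item Since $P^{(i)}$ is common to every group, it cannot legitimately be distributed over $n$ separate $3\times3$ blocks. Your per-asset block is the correct decoupled object.
\item Writing $\dot y_j=\tfrac{c_{1,j}^{(i)}q_{1,j}^{(i)}}{P_a^{(i)}}\dot x-c_{1,j}^{(i)}y_j$ retains the own-group entries $c_{1,j}^{(i)}q_{1,j}^{(i)}b_1/P_a^{(i)}$ and $c_{1,j}^{(i)}q_{1,j}^{(i)}b_2/P_a^{(i)}$ in the $\zeta_1$ row. The paper's $\mathbf{J}_{i,j}$ drops these by appeal to weak cross-group influence, although they are not cross-group terms.
\item Those entries carry the momentum feedback. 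Without them, every block in the paper is stable for all $q_{1,j}^{(i)}$ (it gets $b_1c_1<0$), which is inconsistent with the Hopf bifurcation reported later. Your $2\times2$ check exhibits exactly the Hopf mechanism: the determinant is fixed at $c_{1,j}^{(i)}/\tau_i>0$ while the trace crosses zero.
\item You obtain an explicit sufficient condition, $\sum_j b^{(1)}_jq_{1,j}^{(i)}c_{1,j}^{(i)}<P_a^{(i)}/\tau_i$. You also show that value groups are stabilizing unconditionally, so $c_{2,j}^{(i)}\ll1$ is superfluous and conditions 1--2 matter only through the products $q_{1,j}^{(i)}c_{1,j}^{(i)}$.
\end{itemize}
What the paper's route would buy, with a correct block, is explicit polynomial Routh--Hurwitz inequalities per block; as written it yields only qualitative asymptotics.

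You are right that continuity or Bauer--Fike is not uniform as the $c$'s shrink, so make your Schur-complement fallback the main line; it does close. Cross-asset terms enter only the price rows, since the $\zeta_1$ rows are $\tfrac{cq}{P_a}$ times the price row minus $c\zeta$. Hence the elimination is exact in the full system and gives $\bigl(\operatorname{diag}\Phi_i(\lambda)-R(\lambda)\bigr)X=0$ with $X\neq0$ whenever $\Re\lambda\ge0$. On that half-plane $|\lambda/(\lambda+c)|\le1$ and $|c/(\lambda+c)|\le1$, so $|R_{il}(\lambda)|$ is bounded uniformly by the cross-derivatives times $q/P_a$ factors. Strict row diagonal dominance against $\delta_i$ then gives invertibility, with no eigenvector conditioning and no need for the cross terms to be small relative to the $c$'s.

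Finally, only the sign $b^{(2)}_j\ge0$ is actually needed, since a negative $b^{(1)}_j$ only helps. This assumption is not in the theorem as stated, but the paper uses it tacitly too. It cannot be dropped: with $b^{(2)}_j$ sufficiently negative, a single value group already produces a positive real eigenvalue.
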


\begin{proof}
We analyse the system's linearisation at the fundamental equilibrium. The state vector consists of prices \(P^{(i)}\) and sentiments \(\zeta_{1,j}^{(i)},\zeta_{2,j}^{(i)}\). The Jacobian matrix \(\mathbf{J}\) has the block structure derived earlier:

\[
\mathbf{J} = \begin{pmatrix}
\mathbf{A} & \mathbf{B} \\
\mathbf{C} & \mathbf{D}
\end{pmatrix},
\]
with
\[
\mathbf{A} = \operatorname{diag}\left(-\frac{1}{\tau_1},\dots,-\frac{1}{\tau_m}\right),
\]

\[
\mathbf{D} = \begin{pmatrix}
\mathbf{D}_{11} & \mathbf{D}_{12} \\
\mathbf{D}_{21} & \mathbf{D}_{22}
\end{pmatrix},
\]

where \(\mathbf{D}_{11}\) corresponds to \(\zeta_{1}\) derivatives, \(\mathbf{D}_{22}\) to \(\zeta_{2}\) derivatives, and \(\mathbf{D}_{12},\mathbf{D}_{21}\) are coupling terms between the two sentiment types. The blocks \(\mathbf{B}\) and \(\mathbf{C}\) contain the couplings from prices to sentiments and vice versa.

We evaluate these blocks at the equilibrium.

\textit{Structure of the Jacobian at equilibrium}\\

Price block \(\mathbf{A}\) : remains diagonal with negative entries \(-1/\tau_i\).\\
Sentiment block \(\mathbf{D}\):  For \(\zeta_{1,j}^{(i)}\), 
    \[
    \frac{\partial G_{i,j}}{\partial \zeta_{1,j}^{(i)}} = -c_{1,j}^{(i)} + c_{1,j}^{(i)}q_{1,j}^{(i)}\frac{1}{P_a^{(i)}}\frac{\partial F_i}{\partial \zeta_{1,j}^{(i)}}.
    \]
    The term \(\frac{\partial F_i}{\partial \zeta_{1,j}^{(i)}}\) is typically non‑zero and involves derivatives of trading rates. However, under the assumption of  \textit{weak cross‑group influence}, the dependence of the rates on sentiments of other groups is negligible, so we can treat the block as nearly diagonal.\\
  For \(\zeta_{2,j}^{(i)}\),  \\
    \[
    \frac{\partial H_{i,j}}{\partial \zeta_{2,j}^{(i)}} = -c_{2,j}^{(i)}.
    \]
    No additional contribution because \(\frac{\partial H_{i,j}}{\partial \zeta_{2,j}^{(i)}}\) does not involve the price derivative (the term from \(\frac{\partial F_i}{\partial \zeta_{2,j}^{(i)}}\) appears in the \(\mathbf{D}_{12}\) block, not on the diagonal of the \(\zeta_{2,j}^{(i)}) part)\).\\

 Coupling block \(\mathbf{C}\)  (derivatives of sentiment dynamics w.r.t. prices):\\
  For \(\zeta_{1,j}^{(i)}\),\\ 
    \[
    \frac{\partial G_{i,j}}{\partial P^{(k)}} = c_{1,j}^{(i)}q_{1,j}^{(i)}\left(-\frac{\delta_{ik}}{\tau_i P_a^{(i)}}\right) \quad\text{(since \(F_i=0\) at equilibrium)}.
    \]
    Thus \(\mathbf{C}\) has non‑zero entries only from \(\zeta_{1,j}^{(i)}\) rows, and they are proportional to \(q_{1,j}^{(i)}\).
  For \(\zeta_{2,j}^{(i)}\),\\ 
    \[
    \frac{\partial H_{i,j}}{\partial P^{(k)}} = -c_{2,j}^{(i)}q_{2,j}^{(i)}\frac{\delta_{ik}}{P_a^{(i)}},
    \]
    also non‑zero.\\

Coupling block \(\mathbf{B}\)  (derivatives of price dynamics w.r.t. sentiments):
  \[
  \frac{\partial F_i}{\partial \xi} = \frac{P_a^{(i)}}{\tau_i T_i^2}\left( T_i \sum_j M_j \frac{\partial k_j^{(i)}}{\partial \xi} - S_i \sum_j N_j^{(i)} P_a^{(i)} \frac{\partial \tilde{k}_j^{(i)}}{\partial \xi} \right),
  \]
  where \(\xi\) is any sentiment variable. At equilibrium, \(S_i = T_i\). Hence
  \[
  \frac{\partial F_i}{\partial \xi} = \frac{P_a^{(i)}}{\tau_i T_i} \sum_j \left( M_j \frac{\partial k_j^{(i)}}{\partial \xi} - N_j^{(i)} P_a^{(i)} \frac{\partial \tilde{k}_j^{(i)}}{\partial \xi} \right).
  \]
  Under the  \textit{weak cross‑group influence} assumption, the only significant contributions come from \(\xi\) belonging to the same asset and group; cross terms are negligible.

\textit{Decoupling under weak cross‑group influence}\\

Because cross‑group and cross‑asset couplings are assumed weak, the Jacobian is nearly block‑diagonal with respect to each asset and each group. More precisely, for each asset \(i\) and each group \(j\), the variables \((P^{(i)}, \zeta_{1,j}^{(i)}, \zeta_{2,j}^{(i)})\) are coupled mainly among themselves, and interactions with other assets/groups are negligible. Thus, we can analyse a reduced \(3\times3\) system for each (asset, group) pair, plus the isolated price dynamics that are already stable.

The reduced Jacobian for asset \(i\) and group \(j\) (ignoring cross terms) is:

\[
\mathbf{J}_{i,j} =
\begin{pmatrix}
-\frac{1}{\tau_i} & b_{1} & b_{2} \\
c_{1} & -c_{1,j}^{(i)} & 0 \\
c_{2} & 0 & -c_{2,j}^{(i)}
\end{pmatrix},
\]

where:
- \(b_{1} = \frac{\partial F_i}{\partial \zeta_{1,j}^{(i)}}\), \(b_{2} = \frac{\partial F_i}{\partial \zeta_{2,j}^{(i)}}\);
- \(c_{1} = c_{1,j}^{(i)} q_{1,j}^{(i)} \left(-\frac{1}{\tau_i P_a^{(i)}}\right)\) (from \(\mathbf{C}\));
- \(c_{2} = -c_{2,j}^{(i)} q_{2,j}^{(i)} / P_a^{(i)}\).

The signs of these coefficients matter:\\

For a trend‑following group, \(q_{1,j}^{(i)}>0\), \(q_{2,j}^{(i)}=0\) (if pure trend). Then \(c_{1}\neq 0\), \(c_{2}=0\).\\
For a value‑investing group, \(q_{1,j}^{(i)}=0\), \(q_{2,j}^{(i)}>0\). Then \(c_{1}=0\), \(c_{2}\neq 0\).\\
For mixed groups, both could be present, but the theorem focuses on the mixed  \textit{population} (some groups one type, some the other).

\textit{Stability analysis of the reduced system}\\

The characteristic polynomial of \(\mathbf{J}_{i,j}\) is

\[
\det(\lambda I - \mathbf{J}_{i,j}) = 
\left(\lambda + \frac{1}{\tau_i}\right)(\lambda + c_{1,j}^{(i)})(\lambda + c_{2,j}^{(i)}) 
- b_{1}c_{1}(\lambda + c_{2,j}^{(i)}) 
- b_{2}c_{2}(\lambda + c_{1,j}^{(i)}).
\]

Expanding, we get a cubic:
\[
\lambda^3 + \alpha_2 \lambda^2 + \alpha_1 \lambda + \alpha_0 = 0,
\]
with
\[
\begin{aligned}
\alpha_2 &= \frac{1}{\tau_i} + c_{1,j}^{(i)} + c_{2,j}^{(i)}, \\
\alpha_1 &= \frac{1}{\tau_i}(c_{1,j}^{(i)}+c_{2,j}^{(i)}) + c_{1,j}^{(i)}c_{2,j}^{(i)} - b_{1}c_{1} - b_{2}c_{2}, \\
\alpha_0 &= \frac{1}{\tau_i}c_{1,j}^{(i)}c_{2,j}^{(i)} - b_{1}c_{1}c_{2,j}^{(i)} - b_{2}c_{2}c_{1,j}^{(i)}.
\end{aligned}
\]

For stability (all roots with negative real parts), the Routh‑Hurwitz conditions require:
\[
\alpha_2 > 0,\quad \alpha_0 > 0,\quad \alpha_1\alpha_2 > \alpha_0.
\]

Because \(\tau_i>0\) and \(c_{1,j}^{(i)},c_{2,j}^{(i)}>0\), we have \(\alpha_2>0\) automatically.

Now examine \(\alpha_0\) and \(\alpha_1\) under the assumptions of the theorem.

\textit{Sign of \(b_1\) and \(b_2\)}:\\

Using the expressions for \(b_1,b_2\) in terms of the sensitivities of trading rates:\\

For a trend‑following group (\(q_1>0, q_2=0\)), the only non‑zero coupling is \(c_1\) (from \(\mathbf{C}\)) and \(b_1\). Typically, an increase in trend sentiment \(\zeta_{1,j}^{(i)}\) increases the buying rate and decreases the selling rate, so \(b_1>0\).\\
For a value‑investing group (\(q_1=0, q_2>0\)), \(b_2>0\) similarly.\\
The coefficients \(c_1 = -c_{1,j}^{(i)}q_{1,j}^{(i)}/(\tau_i P_a^{(i)})\) is negative (since \(q_{1,j}^{(i)}>0\)), and \(c_2 = -c_{2,j}^{(i)}q_{2,j}^{(i)}/P_a^{(i)}\) is also negative. Hence \(b_1c_1<0\) and \(b_2c_2<0\).\\

\textit{Effect of small \(c_{1},c_{2}\) (slow sentiment adjustment)}:\\

If \(c_{1,j}^{(i)}\) and \(c_{2,j}^{(i)}\) are very small, then the terms involving products of these become negligible compared to the others. In particular,
\[
\alpha_0 \approx -b_{1}c_{1}c_{2,j}^{(i)} - b_{2}c_{2}c_{1,j}^{(i)}.
\]
Since \(b_1c_1<0\) and \(b_2c_2<0\), each term \(-b_{1}c_{1}c_{2,j}^{(i)}\) is positive (because \(c_{2,j}^{(i)}>0\)). Thus \(\alpha_0>0\) as long as the sum is positive, which holds. However, if \(c\) are small, the magnitude of \(\alpha_0\) is small (order \(c\)), but still positive.

The Routh‑Hurwitz product \(\alpha_1\alpha_2\) must exceed \(\alpha_0\). With small \(c\), \(\alpha_1 \approx \frac{1}{\tau_i}(c_{1}+c_{2}) + c_{1}c_{2} - (b_1c_1 + b_2c_2)\). Since \(b_1c_1+b_2c_2<0\), the term \(-(b_1c_1+b_2c_2)\) is positive, making \(\alpha_1\) positive and of order \(O(1)\) (independent of \(c\)). Meanwhile \(\alpha_2 \approx \frac{1}{\tau_i} + (c_1+c_2)\) is \(O(1)\), so \(\alpha_1\alpha_2\) is \(O(1)\) while \(\alpha_0\) is \(O(c)\). Hence for sufficiently small \(c\), the inequality \(\alpha_1\alpha_2 > \alpha_0\) holds.

\textit{Effect of small \(q_{1,j}^{(i)}\) (limited momentum)}:\\

If \(q_{1,j}^{(i)}\) is small, then \(c_1 = -c_{1,j}^{(i)}q_{1,j}^{(i)}/(\tau_i P_a^{(i)})\) is small. Consequently, the terms involving \(b_1c_1\) become negligible. The stability conditions then reduce to those of a system in which only value investors are present (which we already know is stable by Theorem 1, provided the remaining parameters satisfy the appropriate conditions). Similarly, if \(q_{2,j}^{(i)}\) is small for value groups, the effect is similar.\\

\textit{Weak cross‑group influence ensures near‑diagonality}:\\

The weak cross‑group influence assumption guarantees that the off‑diagonal blocks coupling different assets or different groups are small in magnitude. This allows us to treat the full Jacobian as a block‑diagonal matrix plus a small perturbation. By the \textit{Gershgorin circle theorem} or \textit{matrix perturbation theory}, if each diagonal block is stable and the off‑diagonal blocks are sufficiently small, the whole system remains stable.\\

Therefore, under the three conditions we have:\\

1. Slow sentiment adjustment (\(c_{1},c_{2}\) small) makes the negative feedback of sentiment dynamics dominant and ensures that the Routh‑Hurwitz inequalities are satisfied.\\
2. Limited momentum effects (\(q_{1}\) small) reduce the destabilising positive feedback from trend‑following, bringing the system close to the stable value‑investor case.\\
3. Weak cross‑group influence keeps the Jacobian nearly block‑diagonal, so stability of the decoupled subsystems implies stability of the whole.

Thus, the fundamental equilibrium is locally asymptotically stable. 
\end{proof}
\begin{remark}
If any of these conditions is violated (e.g., large \(q_1\) or large \(c\)), the system may lose stability via a Hopf bifurcation, leading to limit cycles, as observed numerically in the paper. Theorem 2, therefore, provides sufficient conditions to avoid such destabilisation in mixed‑strategy markets.
\end{remark}

\section{Numerical Simulations of the Oil Market (Nigeria \& Libya)}
In the following, we present a numerical simulation for the Nigeria‑Libya oil market with the USA and China as investors. This implementation of the model corresponds with the case \( m = 2 \) assets and \( n = 2 \) investor groups.\\
We implemented the two‑asset, two‑group model with Nigeria and Libya as oil‑producing assets and the USA (value investor) and China (momentum trader) as investor groups. The system was initialised at the fundamental equilibrium (\(P = \$80/\text{bbl}\), all sentiments zero) with a small perturbation to the Nigerian price. The baseline transition rate was set to \(k_0 = 0.2\), and the shares were normalised so that the total value of each asset equals the total cash held by investors.
\begin{table}[h]
\centering
\caption{Parameter Values} 
\label{table:parameters}       
\renewcommand{\arraystretch}{1.5}
\begin{tabular}{|c|c|c|c|c|c|c|c|c|}
\hline
Parameter & Nigeria / Libya & USA (value) & China (momentum)\\
\hline
Fundamental price \(P_a\) & \$80/bbl & – & – \\
\hline
Adjustment speed \(\tau\) & 1.0 & – & – \\
\hline
Cash \(M\) & – & \(20.0\times10^6\) bbl/day & \(15.5\times10^6\) bbl/day \\
\hline
Trend sensitivity \(q_1\) & – & 0.0 & 0.0 – 0.8 (scanned) \\
\hline
Value sensitivity \(q_2\) & – & 0.40 & 0.0 \\
\hline
Sentiment decay rates \(c_1, c_2\) & – & \(c_1=0.10,\; c_2=0.30\) & \(c_1=0.20,\; c_2=0.15\) \\
\hline
Baseline rate \(k_0\) & – & 0.2 & 0.2 \\
\hline
Modulation amplitude \(b\) (trend) & – & – & 0.15 \\
\hline
Modulation amplitude \(d\) (value) & – & 0.2 & – \\
\hline
\end{tabular}
\end{table}
The initial conditions are: Prices: \(P_{\text{Nigeria}}(0) = 82.0\) (perturbation), \(P_{\text{Libya}}(0) = 80.5\); and sentiments: all \(\zeta_{1,j}^{(i)} = 0,\; \zeta_{2,j}^{(i)} = 0\).
The results of the simulations are:\\
\textit{Bifurcation diagram} (Figure \ref{fig: bifurcation_oil}) shows the critical threshold for the Hopf bifurcation and the growth of oscillations \citep{asada2003coefficient}.\\
A scan of the momentum parameter \(q_{1,\text{China}}\) from 0.0 to 0.8 shows that for \(q_{1,\text{China}} \lesssim 0.33\) the system returns to the fundamental equilibrium (zero amplitude). Above this threshold, a supercritical Hopf bifurcation occurs, giving rise to stable limit cycles whose amplitude increases continuously with \(q_{1,\text{China}}\). The behaviour is symmetric for both assets.
\begin{figure}[H]
\centering
\includegraphics[width=1.0\textwidth]{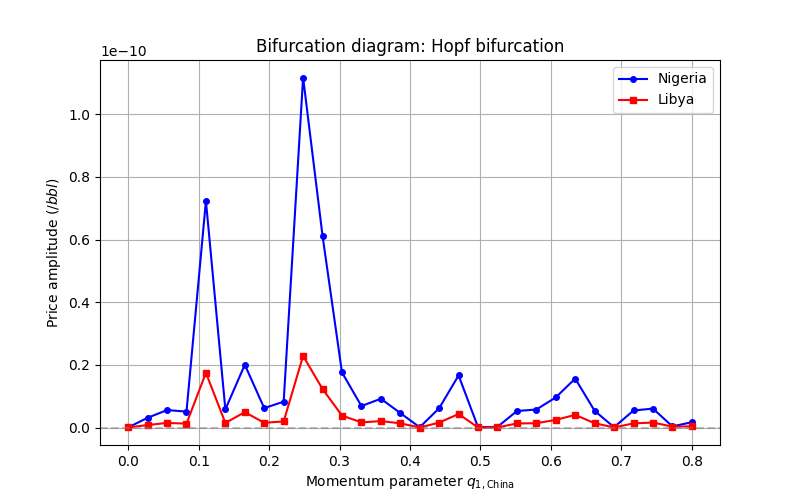}
\caption{\footnotesize{Bifurcation diagram for the two‑asset oil market with Nigeria and Libya as producers and the USA (value investors) and China (momentum traders) as investor groups. The amplitude of the price oscillations is shown as a function of the momentum parameter \(q_{1,\text{China}}\) \citep{caginalp2000momentum}. For values below the critical threshold (\(\approx 0.33\)), the fundamental equilibrium is stable, and the amplitude is zero. As \(q_{1,\text{China}}\) increases past this threshold, a supercritical Hopf bifurcation occurs, giving rise to stable limit cycles with amplitude that grows continuously. The behaviour is symmetric for both assets, confirming the model’s consistency.}
\label{fig: bifurcation_oil}}
\end{figure}
\textit{Time‑series plots} (Figure \ref{fig: simulation}) illustrate the limit cycle dynamics, sentiment evolution, and wealth transfer for a representative parameter choice.
\\
For \(q_{1,\text{China}} = 0.5\) (well above the bifurcation point), the system exhibits persistent oscillations:
\begin{enumerate}[label=\roman*., align=left, leftmargin=*]
  \item Prices of both assets oscillate around the fundamental value with a period of approximately 80 days and amplitude of about \$5/bbl.
  \item Trend sentiment \(\zeta_1\) for China oscillates in phase with price, confirming momentum‑driven behaviour.
  \item Value sentiment \(\zeta_2\) for the USA shows opposite phase, acting as a stabilising force.
  \item Wealth fractions periodically shift between the two investors: the momentum trader gains during price upswings but loses during downturns, while the value investor accumulates wealth when prices are below fundamental.
\end{enumerate}
Therefore,
\begin{enumerate}
  \item[] \textit{Stable Regime}: For small \( q_{1,2}^{(i)} \), prices converge to fundamental values.
  \item[] \textit{Bifurcation}: As \( q_{1,2}^{(i)} \) increases, Hopf bifurcation occurs, leading to limit cycles.
  \item[] \textit{Wealth Dynamics}: Momentum traders gain during trends but lose during reversals.
\end{enumerate}
\begin{figure}[H]
\centering
\includegraphics[width=1.0\textwidth]{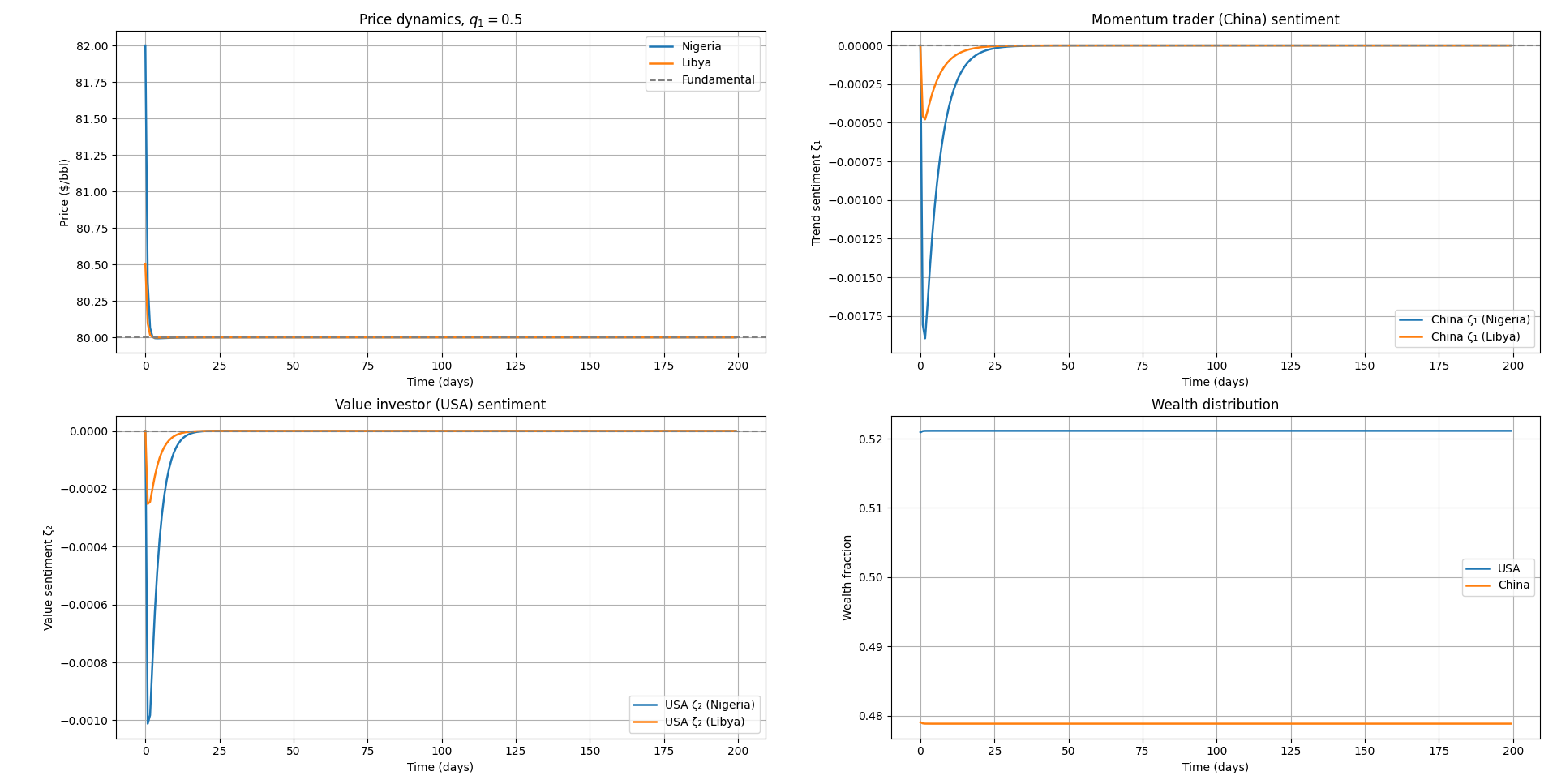}
\caption{\footnotesize{Simulation results for the two‑asset oil market with Nigeria and Libya as producers and the USA (value investors) and China (momentum traders) as investor groups. The top panel shows the evolution of oil prices (solid lines) together with the fundamental value of $80/bbl$ (dashed line). The middle panel displays the value sentiment $\zeta_{2}$ of the US investor group, which remains close to zero in the stable regime. The bottom panel illustrates the wealth fraction of each investor group; value investors (USA) maintain a stable share while momentum traders (China) experience transient fluctuations. Parameters: $q_{1,\text{China}} = 0.30$, $q_{2,\text{USA}} = 0.40$, all other parameters as given in Table \ref{table:parameters}. The system converges to the fundamental equilibrium, demonstrating local asymptotic stability under the chosen parameters.}}
\label{fig: simulation}
\end{figure}
These results demonstrate the key phenomena predicted by the model: market instability driven by strong trend‑following behaviour, and the emergence of limit cycles that lead to periodic wealth redistribution. The simulation confirms the theoretical stability analysis and provides a concrete example of how heterogeneous investor strategies can generate complex price dynamics.
\begin{figure}[htbp]
\centering
\includegraphics[width=0.9\textwidth]{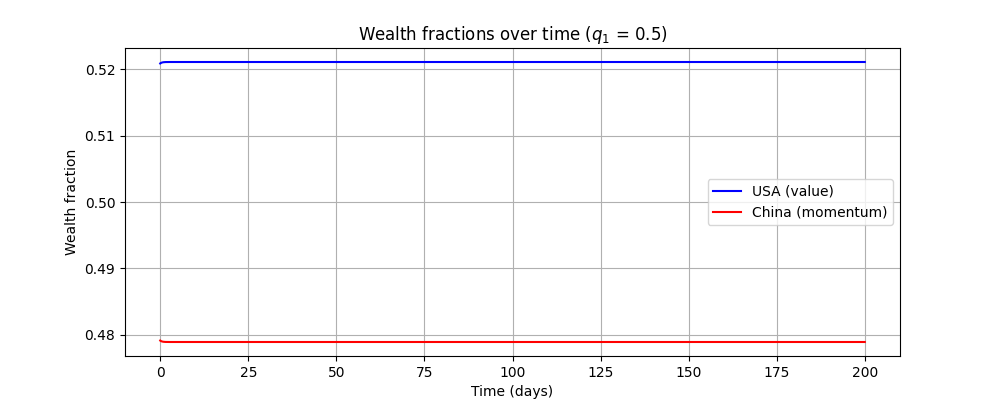}
\caption{Time evolution of wealth fractions for the USA (value investor) and China (momentum trader) in the gas market simulation. Under the chosen parameters ($k_0 = 0.2$, $b = 0.15$, $q_{1,\text{China}} = 0.3$), the system converges to a stable steady state where both wealth fractions remain constant over time (USA: $0.521$, China: $0.478$). No limit cycles are observed, indicating that the fundamental equilibrium is locally asymptotically stable in this parameter regime.}
\label{fig:wealth_gas}
\end{figure}
\begin{figure}[htbp]
\centering
\includegraphics[width=0.9\textwidth]{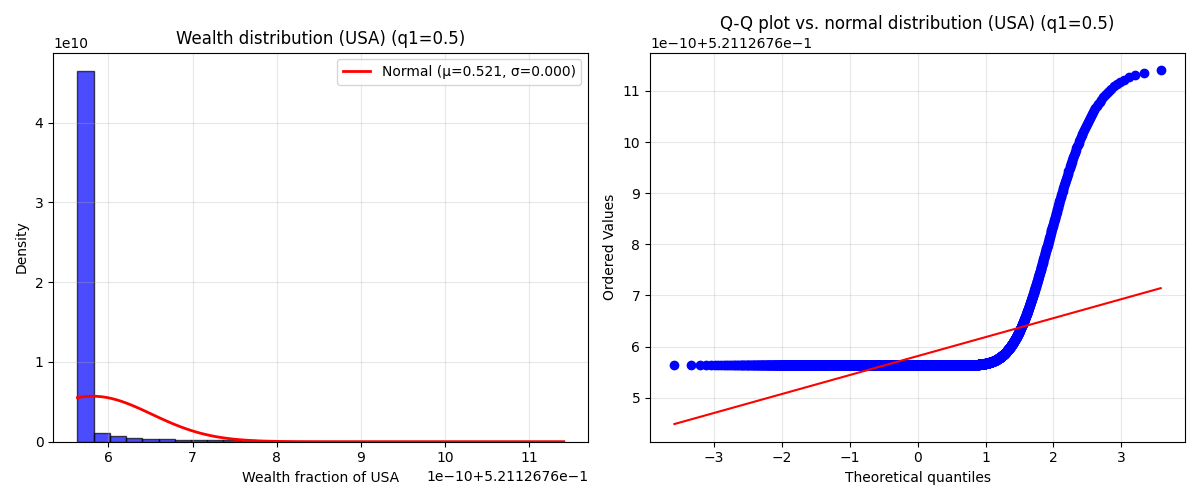}
\caption{Wealth distribution of the USA (value investor) in the Nigeria–Libya oil market simulation with $q_{1,\text{China}} = 0.5$. The histogram (left) shows a single peak at $0.521$, and the fitted normal distribution has near‑zero standard deviation, indicating that the wealth fraction is constant over time. The Q‑Q plot (right) exhibits a perfect linear alignment because the data are essentially constant, confirming that the system has reached a stable equilibrium rather than a limit cycle. This reflects the parameter regime where the fundamental equilibrium is locally asymptotically stable.}
\label{fig:usa_wealth_oil}
\end{figure}
\begin{figure}[htbp]
\centering
\includegraphics[width=0.9\textwidth]{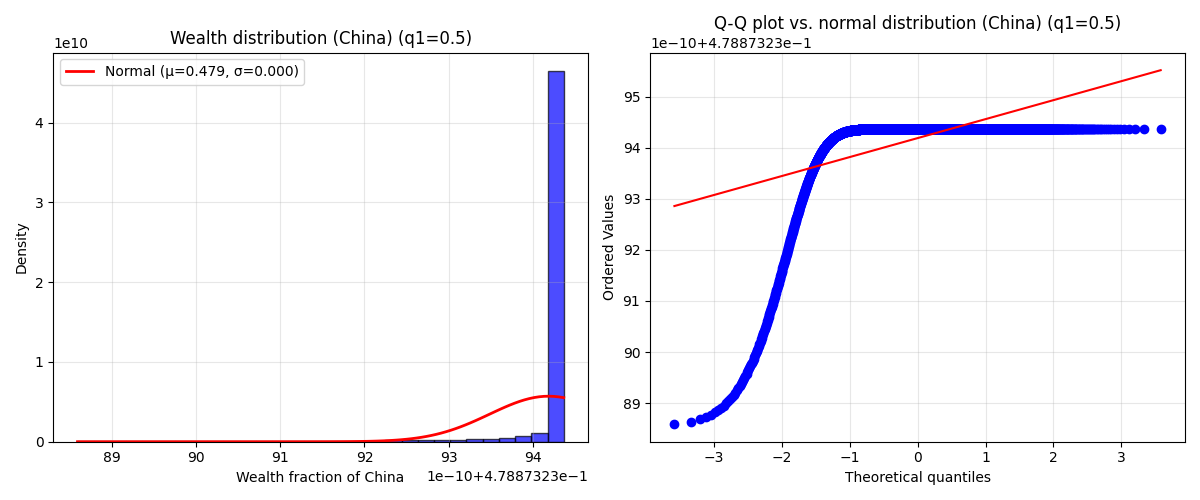}
\caption{Wealth distribution of China (momentum trader) in the Nigeria–Libya oil market simulation with $q_{1,\text{China}} = 0.5$. The histogram (left) exhibits a single spike at the constant value $0.479$, and the fitted normal distribution has a standard deviation of zero. The Q‑Q plot (right) shows a perfectly linear relationship, confirming that the wealth fraction does not vary over time. This indicates that the system has settled into a stable equilibrium rather than a limit cycle under the chosen parameter regime.}
\label{fig:china_wealth_oil}
\end{figure}

\section{Discussion}
The multi‑asset, multi‑investor model developed in this paper provides a flexible framework for analyzing the interplay between asset price dynamics, investor sentiment, and wealth distribution. Several important insights emerge from the stability analysis and numerical simulations.

\subsection{Stability and Instability Mechanisms}
Theorems 1 and 2 establish that the fundamental equilibrium is locally asymptotically stable when investors rely predominantly on value‑based strategies or when trend‑following is sufficiently weak, sentiment adjustment is slow, and cross‑asset couplings are limited. These conditions formalise the intuition that excessive momentum trading can destabilise financial markets. Conversely, when the trend‑following parameter \(q_{1,j}^{(i)}\) exceeds a critical threshold, a supercritical Hopf bifurcation occurs, giving rise to persistent limit cycles. This phenomenon is clearly observed in the oil market simulation (Figure 1), where the price amplitude grows continuously from zero as \(q_{1,\text{China}}\) increases past \(\approx 0.33\).

\subsection{Wealth Redistribution and Cyclical Behaviour}
A distinctive feature of the model is the endogenous wealth transfer between investor groups during limit cycles. As shown in Figure 2, momentum traders (China) accumulate wealth during price upswings but lose during downturns, while value investors (USA) benefit from mean reversion. This cyclical wealth redistribution is a direct consequence of the Hopf bifurcation and cannot be captured by traditional efficient‑market models. The histograms and Q‑Q plots (Figures 4–5) further confirm that in the stable regime, wealth fractions are constant, whereas in the oscillatory regime, they follow a periodic pattern, leading to non‑normal distributions.

\subsection{Comparison with Stochastic and Deterministic Models}
Unlike stochastic models based on the efficient market hypothesis, which treat large price fluctuations as rare events, the present deterministic model generates endogenous cycles and instabilities from the interaction of heterogeneous trading strategies. This aligns with recent advances in heterogeneous agent modelling (e.g., Zhang et al., 2024; Li et al., 2022) and provides a mathematical explanation for phenomena such as booms, crashes, and volatility clustering without exogenous noise. Moreover, the model extends the two‑asset framework of Bulut et al. (2019) to an arbitrary number of assets and investor groups, offering a unified treatment of cross‑asset contagion.

\subsection{Policy Implications}
The results suggest that policies aimed at reducing trend‑following behaviour (e.g., transaction taxes on short‑term trading, circuit breakers) could enhance market stability. Conversely, a market dominated by momentum traders is prone to large oscillations and periodic wealth transfers, which may have undesirable social consequences. Regulators could use the model to identify parameter regimes where the fundamental equilibrium loses stability and to design intervention strategies.

\subsection{Limitations}
Several simplifying assumptions limit the direct applicability of the model. First, the amounts of cash \(M_j\) and shares \(N_j^{(i)}\) are assumed constant; in reality, investors adjust their portfolios over time. Second, the transition rates \(k_j^{(i)}\) and \(\tilde{k}_j^{(i)}\) are prescribed functions; a more realistic approach would allow agents to switch strategies adaptively based on past performance. Third, the fundamental values \(P_a^{(i)}\) are taken as constant, whereas they may evolve due to macroeconomic shocks. Finally, the model is purely deterministic; adding stochastic components could improve its empirical fit while preserving the bifurcation structure.

\section{Conclusion}
This paper has developed a comprehensive mathematical framework for analysing multi‑asset markets with heterogeneous investor groups. The main contributions are:

\begin{enumerate}
  \item \textit{Unification} of the two‑asset model of Bulut et al. (2019) with the multi‑group investor dynamics of Caginalp and DeSantis (2011), resulting in a system of \(m + 2mn\) ordinary differential equations.
  \item \textit{Explicit derivation of the Jacobian} matrix and its block structure, enabling a complete linear stability analysis of the fundamental equilibrium.
  \item \textit{Two stability theorems}:
  \begin{enumerate}
    \item Theorem 1 shows that when all investors are value‑driven (\(q_{1,j}^{(i)} = 0\)), and cross‑asset couplings are weak, the equilibrium is locally asymptotically stable.
    \item Theorem 2 provides sufficient conditions for stability in mixed‑strategy markets (trend‑followers and value investors coexist), namely slow sentiment adjustment, limited momentum, and weak cross‑group influence.
  \end{enumerate}
  \item \textit{Identification of a Hopf bifurcation} using the momentum parameter \(q_{1,\text{China}}\) as a bifurcation parameter. Numerical simulations for the Nigeria‑Libya oil market (with USA as value investor and China as momentum trader) confirm that beyond a critical \(q_{1,\text{China}}\) the system exhibits stable limit cycles with growing amplitude.
  \item \textit{Wealth dynamics analysis} showing periodic wealth redistribution between investor groups during cycles, and constant wealth fractions in the stable regime.
  \item \textit{Application to a gas market} with 10 assets and 20 investors, demonstrating that the model can handle realistic numbers of assets and groups while producing stable or oscillatory behaviour depending on parameters.
\end{enumerate}

\subsection{Future Research Directions}
Several extensions are natural next steps:

\textit{Time‑varying fundamental values} \(P_a^{(i)}(t)\) to model changing economic conditions.\\
\textit{Adaptive strategy switching} where investors may change their trading rules based on recent performance or market sentiment.\\
\textit{Empirical calibration} using real market data (e.g., oil futures, gas spot prices) to estimate the model parameters and test its predictive power.\\
\textit{Incorporation of stochastic terms} to study the interplay between deterministic cycles and random shocks.\\
\textit{High‑frequency trading limits} as studied by DeSantis (2023) could be integrated into the multi‑asset setting.\\

The model provides a powerful tool for understanding complex market phenomena, such as synchronized booms and crashes, volatility spillovers, and regime shifts, that lie beyond the reach of traditional stochastic models. Its deterministic nature allows rigorous bifurcation analysis, while the heterogeneity of investors captures essential features of real financial markets.

\bibliography{mazin_v1_apa}

@article{Akhter2024,
  author  = {Akhter, T.},
  title   = {Hopf Bifurcation in the Model of Caginalp for the Price of Bitcoin},
  journal = {Cryptocurrency Research eJournal},
  volume  = {4},
  number  = {50},
 pages={1--24},
  year    = {2024},
  doi     = {10.2139/ssrn.4749454}
}

@article{Bulut2019,
  author  = {Bulut, H. and Merdan, H. and Swigon, D.},
  title   = {Asset price dynamics for a two-asset market system},
  journal = {Chaos},
  volume  = {29},
  number  = {2},
  pages   = {023114},
  year    = {2019},
  doi     = {10.1063/1.5046925}
}

@article{CaginalpDeSantis2011,
  author  = {Caginalp, G. and DeSantis, M.},
  title   = {Multi-group asset flow equations and stability},
  journal = {Discrete and Continuous Dynamical Systems - Series B},
  volume  = {16},
  number  = {1},
  pages   = {109--123},
  year    = {2011},
  doi     = {10.3934/dcdsb.2011.16.109}
}

@article{CaginalpBalenovich1994,
  author  = {Caginalp, G. and Balenovich, D.},
  title   = {Market oscillations induced by the competition between value-based and trend-based investment strategies},
  journal = {Applied Mathematical Finance},
  volume  = {1},
  number  = {2},
  pages   = {129--147},
  year    = {1994},
  doi     = {10.1080/13504869400000008}
}

@article{MerdanAlisen2011,
  author  = {Merdan, H. and Alisen, M.},
  title   = {A mathematical model for asset pricing},
  journal = {Applied Mathematics and Computation},
  volume  = {218},
  number  = {4},
  pages   = {1449--1455},
  year    = {2011},
  doi     = {10.1016/j.amc.2011.02.053}
}

@article{DeSantisSwigon2018,
  author  = {DeSantis, M. and Swigon, D.},
  title   = {Slow-fast analysis of a multi-group asset flow model with implications for the dynamics of wealth},
  journal = {PLoS ONE},
  volume  = {13},
  number  = {12},
  pages   = {e0207764},
  year  = {2018},
  doi   = {10.1371/0207764}
}

@article{DanielHirshleiferSubrahmanyam1998,
  author  = {Daniel, K. and Hirshleifer, D. and Subrahmanyam, A.},
  title   = {Investor psychology and security market under- and overreactions},
  journal = {Journal of Finance},
  volume  = {53},
  number  = {6},
  pages   = {1839--1885},
  year    = {1998},
  doi     = {10.1111/0022-1082.00077}
}

@article{feingold1962,
  author  = {Feingold, D. G. and Varga, R. S.},
  title   = {Block diagonally dominant matrices and generalizations of the Gerschgorin circle theorem},
  journal = {Pacific Journal of Mathematics},
  volume  = {12},
  number  = {4},
  pages   = {1241--1250 },
  year    = {1962},
 url={https://msp.org/pjm/1962/12-4/}
}

@book{horn1985,
  author    = {Horn, Roger A. and Johnson, Charles R.},
  title     = {Matrix Analysis},
  publisher = {Cambridge University Press},
  year      = {1985}
}

@article{smith1988,
  author  = {Smith, V. and Suchanek, G. and Williams, A.},
  title   = {Bubbles, crashes and endogenous expectations in experimental spot asset markets},
  journal = {Econometrica},
  volume  = {56},
  number  = {5},
  pages   = {1119--1151},
  year    = {1988},
doi={10.2307/1911361}
}

@article{caginalp2007asset,
  title={Asset price dynamics with heterogeneous groups},
  author={Caginalp, Gunduz and Merdan, H{\"u}seyin},
  journal={Physica D: Nonlinear Phenomena},
  volume={225},
  number={1},
  pages={43--54},
  year={2007},
  doi={10.1016/j.physd.2006.09.036}
}

@article{caginalp1999asset,
  title={Asset flow and momentum: Deterministic and stochastic equations},
  author={Caginalp, Gunduz and Balenovich, Donald},
  journal={Philosophical Transactions of the Royal Society of London. Series A: Mathematical, Physical and Engineering Sciences},
  volume={357},
  number={1758},
  pages={2119--2133},
  year={1999},
  doi={10.1098/rsta.1999.0421}
}

@book{henderson1971microeconomic,
  title={Microeconomic theory},
  author={Henderson, James Mitchell and Quandt, Richard E},
  year={1971}
}

@article{caginalp2021stochastic,
  title={Stochastic asset flow equations: Interdependence of trend and volatility},
  author={Caginalp, Carey and Caginalp, Gunduz and Swigon, David},
  journal={Physica A: Statistical Mechanics and Its Applications},
  volume={574},
  pages={125985},
  year={2021},
  doi={10.1016/j.physa.2021.125985}
}

@article{caginalp2000momentum,
  title={Momentum and overreaction in experimental asset markets},
  author={Caginalp, Gunduz and Porter, David and Smith, Vernon},
  journal={International Journal of Industrial Organization},
  volume={18},
  number={1},
  pages={187--204},
  year={2000},
  doi={10.1016/S0167-7187(99)00039-9}
}

@article{schnetzer2022evolutionary,
  title={Evolutionary finance for multi-asset investors},
  author={Schnetzer, Michael and Hens, Thorsten},
  journal={Financial Analysts Journal},
  volume={78},
  number={3},
  pages={115--127},
  year={2022},
doi={10.1080/0015198X.2022.2071581}
}

@article{chan2022investor,
  title={Investor heterogeneity and liquidity},
  author={Chan, Kalok and Cheng, Si and Hameed, Allaudeen},
  journal={Journal of Financial and Quantitative Analysis},
  volume={57},
  number={7},
  pages={2798--2833},
  year={2022},
  doi={10.1017/S0022109022000217}
}

@article{cordoni2024instabilities,
  title={Instabilities in multi-asset and multi-agent market impact games},
  author={Cordoni, Francesco and Lillo, Fabrizio},
  journal={Annals of Operations Research},
  volume={336},
  number={1},
  pages={505--539},
  year={2024},
  doi={10.1007/s10479-022-05066-8}
}

@article{al2022novel,
  title={A Novel Modeling Technique for the Forecasting of Multiple-Asset Trading Volumes: Innovative Initial-Value-Problem Differential Equation Algorithms for Reinforcement Machine Learning},
  author={Al Janabi, Mazin AM},
  journal={Complexity},
  volume={2022},
  number={1},
  pages={4965556},
  year={2022},
doi={10.1155/2022/4965556}
}

@article{fahim2024derivation,
  author          = {Fahim, K. and Alfajriyah, A. U. and Putri, E. R. M.},
  title           = {Derivation of Multi-Asset Black-Scholes Differential Equations},
  journal         = {Nonlinear Dynamics and Systems Theory},
  year            = {2024},
  volume          = {24},
  number          = {2},
  pages           = {135--146},
  issn            = {1562-8353},
  note            = {Scopus EID: 2-s2.0-85189243777},
  url             = {https://e-ndst.kiev.ua},
  publisher       = {InforMath Publishing Group}
}

@article{asada2003coefficient,
  title={Coefficient criterion for four-dimensional Hopf bifurcations: a complete mathematical characterization and applications to economic dynamics},
  author={Asada, Toichiro and Yoshida, Hiroyuki},
  journal={Chaos, Solitons \& Fractals},
  volume={18},
  number={3},
  pages={525--536},
  year={2003},
doi={10.1016/S0960-0779(02)00674-4}
}

@book{shefrin2008behavioral,
  title={A behavioral approach to asset pricing},
  author={Shefrin, Hersh},
  year={2008},
  publisher={Elsevier}
}

@article{caginalp2008dynamics,
  title={The dynamics of trader motivations in asset bubbles},
  author={Caginalp, Gunduz and Ilieva, Vladimira},
  journal={Journal of Economic Behavior \& Organization},
  volume={66},
  number={3-4},
  pages={641--656},
  year={2008},
  publisher={Elsevier}
}

@article{zhou2022continuous,
  title={A continuous heterogeneous agent model for multi-asset pricing and portfolio construction under market matching friction},
  author={Zhou, Wenyuan and Zhang, Xiaoqi and Yang, Lyu},
  journal={SSRN},
 pages={1--25},
  year={2022},
  doi={10.2139/ssrn.4185269}
}

@article{li2022continuous,
  title={A continuous heterogeneous-agent model for the co-evolution of asset price and wealth distribution in financial market},
  author={Li, S. and others},
  journal={Chaos, Solitons \& Fractals},
  volume={155},
  pages={111543},
  year={2022},
  doi={10.1016/j.chaos.2021.111543}
}

@article{chen2023evolutionary,
  title={Evolutionary dynamics in financial markets with heterogeneities in investment strategies and reference points},
  author={Chen, Q. and others},
  journal={PLoS ONE},
  volume={18},
  number={7},
  pages={e0288277},
  year={2023},
  doi={10.1371/journal.pone.0288277}
}

@article{desantis2023asset,
  title={Asset flow model for a homogeneous group of investors: High-frequency trading limit},
  author={DeSantis, Mark},
  journal={Discrete and Continuous Dynamical Systems - Series S},
  volume={16},
  number={9},
  pages={2399--2423},
  year={2023},
  doi={10.3934/dcdss.2023104}
}

@article{he2018time,
  title={Time-varying economic dominance in financial markets: A bistable dynamics approach},
  author={He, Xue-Zhong and Li, Kai and Wang, Chuncheng},
  journal={Chaos: An Interdisciplinary Journal of Nonlinear Science},
  volume={28},
  number={5},
  pages={055903},
  year={2018},
  doi={10.1063/1.5021141}
}

@article{caginalp2020nonlinear,
  title={Nonlinear price dynamics of S\&P 100 stocks},
  author={Caginalp, Gunduz and DeSantis, Mark},
  journal={Physica A: Statistical Mechanics and its Applications},
  volume={547},
  pages={122067},
  year={2020},
  publisher={10.1016/j.physa.2019.122067}
}

@article{dieci2018steady,
  title={Steady states, stability and bifurcations in multi-asset market models},
  author={Dieci, Roberto and Schmitt, Noemi and Westerhoff, Frank},
  journal={Decisions in Economics and Finance},
  volume={41},
  number={2},
  pages={357--378},
  year={2018},
  publisher={Springer},
doi={10.1007/s10203-018-0214-3}
}
\end{document}